\documentclass[12pt]{article}
\usepackage{hyperref}
\usepackage{amsthm}
\usepackage{amssymb}
\usepackage{array}
\usepackage{latexsym}
\usepackage{enumerate}
\usepackage{amsmath}
\usepackage{amsfonts}
\usepackage{graphicx}
\usepackage{psfrag}
\usepackage{comment}
\usepackage{breqn}
\usepackage[english]{babel}
\usepackage[T1]{fontenc}
\usepackage{geometry}
\geometry{a4paper}
\newcommand{\rank}{\mathrm{rank}\,}
\ifx\smallsetminus\undefined
\def\smallsetminus{\setminus}
\fi
\title{Intersections of the  Hermitian surface with irreducible
  quadrics in even characteristic}
\author{A.~Aguglia\footnote{Dipartimento di Meccanica, Matematica e Management, Politecnico di Bari,
  Via Orabona 4, I-70126 Bari, Italy}
,\,  \,L.~Giuzzi\footnote{DICATAM, Section of Mathematics, Universit\`a di
Brescia, Via Branze 43, I-25123 Brescia, Italy}}
\date{}
\theoremstyle{plain}
\newtheorem{prop}{Proposition}[section]
\newtheorem{theorem}[prop]{Theorem}

\newtheorem{lemma}[prop]{Lemma}
\theoremstyle{definition}

\newcommand\cR{{\mathcal R}}

\newcommand\cH{\mathcal H}
\newcommand{\cQ}{\mathcal Q}

\newcommand{\PGU}{\mathrm{PGU}}

\newcommand{\PG}{\mathrm{PG}}
\newcommand{\AG}{\mathrm{AG}}
\newcommand{\GF}{\mathrm{GF}}

\newcommand{\Tr}{\mathrm{Tr}}
\newcommand{\cC}{\mathcal C}

\begin{document}

\maketitle

\begin{abstract}
We determine the possible intersection sizes of a Hermitian surface $\cH$
with an irreducible quadric of $\PG(3,q^2)$
sharing at least a tangent plane at a common non-singular point
when $q$ is even.
\end{abstract}
{\bf Keywords}: Hermitian surface; quadrics; functional codes.
\par\noindent
{\bf MSC(2010)}: 05B25; 51D20; 51E20.

\section{Introduction}
The study of intersections of geometric objects is a classical problem
in geometry; see e.g. \cite{EH,fulton}.
In the case of combinatorial geometry, it has
several possible applications either
to characterize configurations
or to construct new codes.
\par
Let $\cC$ be a projective $[n,k]$ linear code over $\GF(q)$.
It is always possible to consider the set of points
$\Omega$ in $\PG(k-1,q)$ whose coordinates correspond to the
columns of any generating matrix for $\cC$. Under this setup
the problem of determining the minimum weight of $\cC$ can be
reinterpreted, in a purely geometric setting, as finding
the largest hyperplane sections of $\Omega$.
More in detail, any codeword $c\in\cC$ corresponds to a linear functional
evaluated on the points of $\Omega$; see \cite{L,tvn}.
For examples of applications of these techniques see \cite{IG1,IG2,IG3}.

Clearly, it is not necessary to restrict the study to hyperplanes.
The higher weights of $\cC$ correspond to sections of $\cC$
with subspaces of codimension larger than $1$; see \cite{tv} and
also \cite{HTV} for  Hermitian varieties.

A different generalization consists in studying codes arising
from the evaluation
on $\Omega$ of functionals of degree $t>1$; see \cite{L}.
These constructions  yield, once more, linear codes, whose
weight distributions depend on the intersection patterns of $\Omega$ with
all possible algebraic hypersurfaces of $\PG(k-1,q)$ of degree $t$.

The case of quadratic functional codes on Hermitian varieties has been
extensively investigated in recent years; see \cite{BBFS,E1,EHRS,E2,E3,HS}.
However, it is still an open problem to classify all possible intersection
numbers and patterns
between a quadric surface $\cQ$ in $\PG(3,q^2)$ and
a Hermitian surface $\cH=\cH(3,q^2)$.

In \cite{AG}, we determined the possible intersection
numbers between $\cQ$ and  $\cH$ in $\PG(3,q^2)$ under
the assumption that $q$ is an odd prime power and $\cQ$ and $\cH$
share at least one tangent plane.
The same problem has been studied independently also in \cite{CP} for
$\cQ$ an elliptic quadric; this latter work contains
also  some results for $q$ even.

In this paper we fully extend the arguments of \cite{AG} to the case of
$q$ even. It turns out that the geometric properties being considered as
well as the algebraic conditions to impose are
different and more involved than those for the odd $q$ case.
Our main result is contained in the following theorem.
\begin{theorem}\label{main1}
In $\PG(3,q^2)$, with $q$ even, let $\cH$ and $\cQ$
be respectively a Hermitian surface and
an irreducible quadric with the same tangent plane at at least one common
non-singular point $P$.
Then, the possible sizes of the intersection $\cH\cap\cQ$ are
as follows.
\begin{itemize}
\item For $\cQ$ elliptic:
\[q^3-q^2+1,q^3-q^2+q+1, q^3-q+1, q^3+1, q^3+q+1, q^3+q^2-q+1, q^3 + q^2+1. \]
\item For $\cQ$ a quadratic cone:
\[ q^3-q^2+q+1,  q^3-q+1, q^3+q+1, q^3+q^2-q+1,   q^3 + 2q^2-q+1.\]
\item For $\cQ$ hyperbolic:
\[ q^2+1,q^3-q^2+1,  q^3-q^2+q+1, q^3-q+1, q^3+1,  q^3+q+1,
q^3+q^2-q+1, q^3+ q^2+1, \]
\[   q^3+2q^2-q+1,  q^3+2q^2+1,  q^3+3q^2-q+1,   2q^3+q^2+1.  \]
\end{itemize}
\end{theorem}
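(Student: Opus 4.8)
The plan is to fix a common non-singular point $P$ where $\cH$ and $\cQ$ share a tangent plane $\pi$, and to exploit the large stabilizer of the pair $(\cH,P)$ inside $\PGU(4,q^2)$ to put the configuration into a normal form. First I would choose projective coordinates so that $P=(0,0,0,1)$, the common tangent plane $\pi$ has a fixed equation, and $\cH$ has its standard Hermitian equation; the subgroup of $\PGU(4,q^2)$ fixing $P$ and $\pi$ acts on the pencil of quadrics through $P$ tangent to $\pi$, and I would use it to reduce the defining polynomial of $\cQ$ to a short list of canonical forms, treating the elliptic, cone, and hyperbolic cases separately according to the rank and type of $\cQ$. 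The even characteristic is what makes this delicate: the usual passage from a quadratic form to its polar bilinear form loses information, nuclei and radicals behave differently, and tangency must be expressed through the quadratic form directly rather than through the associated symmetric matrix, so the algebraic conditions singling out each canonical form are genuinely more involved than in the odd case of \cite{AG}.

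Once a canonical equation for $\cQ$ is fixed, the core computation is to count $|\cH\cap\cQ|$ by eliminating one variable and reducing the simultaneous system $\cH=\cQ=0$ to counting points on a plane curve, or on a union of curves, over $\GF(q^2)$. Concretely, I would parametrize points of $\cQ$ (or of a suitable affine part of it) and substitute into the Hermitian equation, obtaining an equation of the form $X^{q+1}+\cdots=0$ whose solution count I control via the trace map $\Tr\colon\GF(q^2)\to\GF(q)$ and norm arguments. The number of solutions typically splits as a sum of contributions: points on certain lines or conics lying on both surfaces, plus a ``generic'' contribution governed by the number of zeros of a one-variable polynomial of bounded degree. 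Keeping careful track of the points at infinity on $\pi$ and of any lines of $\cQ$ that meet $\cH$ in a Hermitian curve or a Baer subline is essential, since these account for the spread of possible totals.

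The main obstacle I anticipate is the character-sum / counting step in even characteristic. In odd characteristic one can diagonalize and reduce intersection counts to evaluating Gauss sums or to counting points on a single conic via Legendre symbols; here the inseparability of $x\mapsto x^2$ forces one to work with additive characters and the trace form $x+x^q$, and the number of zeros of the relevant $\GF(q^2)$-polynomials can jump depending on whether certain coefficients lie in $\GF(q)$, whether a trace condition $\Tr(c)=0$ holds, and whether an auxiliary linearized polynomial is bijective. I expect the proof to branch into several subcases governed by such trace and norm conditions, and each branch yields one of the listed sizes; the bookkeeping to show that exactly the values in the three lists occur, and no others, is where the real work lies. A secondary subtlety is ensuring that every canonical form is actually realizable by an admissible $\cQ$ (so that each claimed size is attained), which I would settle by exhibiting explicit quadrics for each case. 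To organize this, I would prove a sequence of lemmas—one per quadric type—each establishing the intersection sizes for that type, and then Theorem~\ref{main1} follows by collecting the three lists.
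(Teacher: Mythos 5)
Your setup coincides with the paper's: normalize $\cH$ to $z^q+z=x^{q+1}+y^{q+1}$, place the common point at $P=(0,0,0,1)$ so that the shared tangent plane is the plane at infinity, write $\cQ$ as $z=ax^2+by^2+cxy+dx+ey+f$, and use the stabilizer of $P$ in $\PGU(4,q)$ to normalize the coefficients $a,b,c$ according to how $\cQ$ meets the plane at infinity (this is exactly the paper's Lemma~3.3). The genuine gap is in your core counting step. After eliminating $z$, the resulting equation in $(x,y)\in\GF(q^2)^2$ has degree roughly $2q$; it is not a plane curve of bounded degree, and no splitting into ``lines and conics plus zeros of a one-variable polynomial of bounded degree'' is available in general. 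Moreover, since the theorem asserts exact cardinalities, character-sum estimates cannot suffice: you need exact evaluation, and your proposal never says how to get it. The device that makes exact counting possible --- and which is the heart of the paper's proof --- is the $\GF(q)$-linear representation: writing $x=x_0+\varepsilon x_1$, $y=y_0+\varepsilon y_1$ with $\varepsilon^2+\varepsilon+\nu=0$, and using that $x\mapsto x^q$ is $\GF(q)$-linear, the eliminated equation becomes a single \emph{quadratic} equation over $\GF(q)$ in $x_0,x_1,y_0,y_1$. Hence the affine part of $\cH\cap\cQ$ is in bijection with the affine points of a quadric $\Xi$ of $\PG(4,q)$, and all possible counts follow from the classification of quadrics of $\PG(4,q)$ and of their sections $\Xi_{\infty}$ at infinity, organized by rank and type. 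The trace and norm conditions you anticipate do appear, but as computable invariants of $\Xi_{\infty}$ (namely $\det A_{\infty}=1+c^{2(q+1)}$ and the Arf-type invariant $\Tr_{q}(\alpha)$), not as unresolved conditions on linearized polynomials.

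Even granting an exact count of the affine part, your proposal has no mechanism for the harder half of the theorem: showing that no sizes \emph{outside} the three lists occur. One must match each possible pair (type of $\Xi$, type of $\Xi_{\infty}$) against each configuration of $\cC_{\infty}=\cQ\cap\cH\cap\Sigma_{\infty}$ (one point, one line, or two lines), and several combinations must be excluded by separate arguments: the paper proves that when $\cC_{\infty}$ is a single point and $\rank\Xi_{\infty}=2$ the quadric $\Xi_{\infty}$ is a conjugate plane pair (so only $q+1$ points at infinity), that when $\cC_{\infty}$ is two lines and $\rank\Xi_{\infty}=2$ the plane pair is defined over $\GF(q)$, and --- via a combinatorial regulus count --- that $\Xi$ cannot be a cone over a hyperbolic quadric in the two-line case, since otherwise $\cQ$ would contain at least three generators of one regulus inside $\cH$, forcing $\cQ$ to be permutable with $\cH$ and giving the contradictory size $2q^3+q^2+1$. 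Without these exclusion lemmas (or equivalents), your case analysis would at best produce a superset of the listed values. So your proposal reproduces the paper's framing and normalization, but the two pieces that constitute the actual proof --- the restriction-of-scalars quadric $\Xi\subset\PG(4,q)$ that yields exact counts, and the exclusion arguments pinning down exactly which counts survive --- are missing.
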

We remark that, as we are dealing with irreducible quadrics in $PG(3,q^2)$, by quadratic cone
(or, in short, cones) we shall always mean in dimension $3$ the quadric projecting an irreducible
conic contained in a plane $\pi$ from a vertex $V\not\in\pi$.

Our methods are algebraic in nature, based upon
the $\GF(q)$--linear representation of vector spaces over $\GF(q^2)$,
but in order to  rule out some cases a more geometric and combinatorial
approach is needed as well as some considerations on the action of the
unitary groups.

For generalities on Hermitian varieties in projective spaces the reader is referred to \cite{PGOFF, S}.
Basic notions on quadrics over finite fields are found in \cite{HT,PGOFF}.

\section{Invariants of quadrics}
\label{s2}

In this section we recall some basic invariants of quadrics in
even characteristic; the main reference for these results is \cite[\S 1.1, 1.2]{HT},
whose notation and approach we closely follow.

Recall that a quadric $\cQ$ in $\PG(n,q)$ is just the set of points
$(x_0,\ldots,x_n)\in\PG(n,q)$ such that $F(x_0,\ldots,x_n)=0$ for
some non-null quadratic form
\[ F(x_0,\ldots,x_n)=\sum_{i=0}^n a_ix_i^2+\sum_{i<j} a_{ij}x_ix_j. \]
If there is no change of coordinates reducing $F$ to a form in
fewer variables, then $\cQ$ is \emph{non-degenerate}; otherwise
$\cQ$ is \emph{degenerate}. The minimum number of indeterminates
which may appear in an equation for $\cQ$ is the \emph{rank} of
the quadric, denoted by $\rank\cQ$; see \cite[\S 15.3]{FPS3D}.

Let consider the  quadric $\cQ$ in $\PG(3,q)$  of equation $\sum_{i=0}^3 a_ix_i^2+\sum_{i<j} a_{ij}x_ix_j=0$ and
define
\[ A:=\begin{pmatrix}
   2a_0 & a_{01} & a_{02} & a_{03} \\
   a_{01} & 2a_{1} & a_{12} & a_{13} \\
   a_{02} & a_{12} & 2a_{2} & a_{23} \\
   a_{03} & a_{13} & a_{23} & 2a_{3}
   \end{pmatrix},\quad
   B:=\begin{pmatrix}
   0  & a_{01} & a_{02} & a_{03} \\
   -a_{01} & 0 & a_{12} & a_{13} \\
   -a_{02} & -a_{12} & 0 & a_{23} \\
   -a_{03} & -a_{13} & -a_{23} & 0
   \end{pmatrix} \]
and
\begin{equation}\label{eqAlpha} \alpha:=
  \frac{\det A-\det B}{4\det B}.
\end{equation}

When $q$ is even
 $\det A$, $\det B$ and $\alpha$ should be interpreted as follows. In $A$ and $B$ we replace the terms
 $a_i$ and  $a_{ij}$ by indeterminates  $Z_i$ and $Z_{ij}$ and we evaluate  $\det A$, $\det B$ and $\alpha$
 as rational functions over the integer ring $\mathbb{Z}$. Then we specialize $Z_i$ and  $Z_{i}$ to $a_i$ and $a_{i,j}$.

Actually, it turns out that $\cQ$ is non-degenerate if and only
if $\det A\neq 0$; see \cite[Theorem 1.2]{HT}.
By the same theorem, when $q=2^h$, a non-degenerate quadric
$\cQ$ of $PG(3,q)$ is hyperbolic or elliptic
according as
\[ \Tr_{q}(\alpha)=0 \mbox{ or } \Tr_{q}(\alpha)=1, \]
respectively, where
$\Tr_{q}$ denotes the absolute trace $\GF(q)\to \GF(2)$ which maps $x\in \GF(q)$
to $x+x^2+x^{2^2}+\ldots+x^{2^{h-1}}$.

\section{Some technical tools}
In this section we are going to prove a series of  lemmas that we will be useful in the proof of our main result, namely  Theorem \ref{main1}.

Henceforth, we shall assume $q$ to be even; $x,y,z$ will denote affine coordinates
in $\AG(3,q^2)$ and the corresponding homogeneous coordinates will be
$J,X,Y,Z$. The hyperplane at infinity
of $\AG(3,q^2)$, denoted as $\Sigma_{\infty}$,  has equation
$J=0$.

Since all non-degenerate Hermitian surfaces of $\PG(3,q^2)$ are projectively
equivalent,
we can assume, without loss of generality,
$\cH$ to have affine equation
\begin{equation}\label{eqH}
z^q+z=x^{q+1}+y^{q+1}.
\end{equation}
Since $\PGU(4,q)$ is transitive on $\cH$, see \cite[\S 35]{S}, we can also suppose that a point
with common tangent plane to  $\cH$ and $\cQ$ is
$P=P_{\infty}(0,0,0,1) \in\cH$; under these assumptions, the tangent plane at $P$ to $\cH$
is $\Sigma_{\infty}$.
Under the aforementioned assumptions,   $\cQ$
has affine equation
\begin{equation}\label{eqQ}
z=ax^{2}+by^{2}+cxy+dx+ey+f
\end{equation}
with $a,b,c,d,e,f\in\GF(q^2)$.
A direct computation proves that $\cQ$ is non-degenerate if and only if $c\neq0$; furthermore
$\cQ$ is hyperbolic or elliptic according as the value of
$$\Tr_{q^2}(ab/c^2)$$ is respectively $0$ or $1$.
When $c=0$ and $(a,b)\neq (0,0)$, the quadric $\cQ$ is a cone with vertex
a single point.

Write now $\cC_{\infty}:=\cQ\cap\cH\cap\Sigma_{\infty}$.
If $\cQ$ is elliptic,
the point $P_{\infty}$ is, clearly, the only point at infinity of
$\cQ\cap\cH$, that is
$\cC_{\infty}=\{P_{\infty}\}$.
The nature of
$\cC_{\infty}$ when $\cQ$ is either hyperbolic or a  cone, is
detailed by the following lemma.
\begin{lemma}
\label{l0}
 If $\cQ$ is a  cone, then $\cC_{\infty}$
  consists of either $1$ point or $q^2+1$ points on a line.
  When $\cQ$ is a hyperbolic quadric, then $\cC_{\infty}$ consists of
  either $1$ point, or $q^2+1$ points on a line or $2q^2+1$ points on two lines. All cases may
  actually occur.
\end{lemma}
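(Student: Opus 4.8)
The plan is to work at infinity in homogeneous coordinates and read off $\cC_\infty$ directly. Homogenising \eqref{eqH} and \eqref{eqQ} to degrees $q+1$ and $2$ respectively and setting $J=0$, the traces of $\cH$ and $\cQ$ on $\Sigma_{\infty}$ are cut out by the binary forms
\[ g(X,Y):=X^{q+1}+Y^{q+1} \qquad\text{and}\qquad h(X,Y):=aX^2+cXY+bY^2, \]
both of which are independent of $Z$. Hence each of $\cH\cap\Sigma_{\infty}$ and $\cQ\cap\Sigma_{\infty}$ is the cone with vertex $P_{\infty}=(0,0,0,1)$ projecting the zero set in $\PG(1,q^2)$ of the corresponding form, and $\cC_\infty$ is the union of $P_{\infty}$ with the lines through $P_{\infty}$ whose direction $(X:Y)$ is a common projective root of $g$ and $h$.

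First I would convert the size of $\cC_\infty$ into the number $k$ of common roots. Distinct lines through $P_{\infty}$ meet only at $P_{\infty}$ and each carries $q^2+1$ points, so $|\cC_\infty|=1+kq^2$, and $k\in\{0,1,2\}$ gives exactly $1,\,q^2+1,\,2q^2+1$. Now $g$ has exactly $q+1$ roots, namely the directions $t=X/Y$ with $t^{q+1}=1$. The roots of $h$ depend on the type of $\cQ$. If $\cQ$ is a cone then $c=0$ and $h=(\sqrt a\,X+\sqrt b\,Y)^2$ is a square in even characteristic, so $h$ has a single projective root and $k\le 1$; if $\cQ$ is hyperbolic then $c\ne0$ and, by the invariant recalled in Section~\ref{s2}, $h$ has two distinct roots precisely because $\Tr_{q^2}(ab/c^2)=0$, whence $k\le 2$. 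This already yields the sizes asserted in the statement.

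It then remains to realise every admissible value of $k$. Prescribing the roots $t_1,t_2$ of $h$ and reading off $a=1$, $c=t_1+t_2$, $b=t_1t_2$ (with the convention that a root $\infty$ or $0$ corresponds to the directions $(1:0)$ or $(0:1)$, which never satisfy $g$), the count $k$ equals the number of chosen roots lying in the norm-one set $\{t:t^{q+1}=1\}$. Selecting $0$, $1$ or $2$ of the $t_i$ in this set produces all the values of $k$ needed in the hyperbolic case, and $0$ or $1$ of them in the cone case; note that $t_1\ne t_2$ forces $c\ne0$ in even characteristic, so the quadrics so obtained are genuinely non-degenerate.

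The main obstacle I foresee is the bookkeeping for small $q$: when $q=2$ every non-zero element of $\GF(q^2)$ has norm one, so the values $k<2$ in the hyperbolic case cannot come from roots in $\GF(q^2)^{*}$ and must instead be obtained through the exceptional directions $(1:0)$ and $(0:1)$ --- for instance $h=cXY$ gives $k=0$ and $h=XY+\lambda Y^2$ with $\lambda^{q+1}=1$ gives $k=1$. Verifying in each constructed example that $\cQ$ is irreducible and of the prescribed type (a cone, or hyperbolic rather than elliptic) is the routine but necessary checking that completes the argument.
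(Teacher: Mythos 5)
Your proof is correct and follows essentially the same route as the paper's: both rest on the observation that $\cH\cap\Sigma_{\infty}$ and $\cQ\cap\Sigma_{\infty}$ split into lines through $P_{\infty}$ (cones over the zero sets of the binary forms $X^{q+1}+Y^{q+1}$ and $aX^2+cXY+bY^2$), so that $|\cC_{\infty}|=1+kq^2$ where $k$ counts the common lines, with $k\le 1$ for a cone (the form is a square in even characteristic) and $k\le 2$ for a hyperbolic quadric. The only real difference is in the existence part, where the paper argues synthetically (a hyperbolic quadric through two given generators of $\cH$, a cone with vertex on a given line) while you construct explicit coefficients $a,b,c$ from prescribed roots; your version is equally valid and has the merit of handling the small case $q=2$, where every element of $\GF(q^2)^{*}$ has norm one, explicitly via the directions $(1:0)$ and $(0:1)$.
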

\begin{proof}
As both $\cH\cap\Sigma_{\infty}$ and $\cQ\cap\Sigma_{\infty}$ split in
lines through $P_{\infty}$, it is straightforward to
see that the only possibilities for $\cC_{\infty}$ are
those outlined above; in particular, when $\cQ$ is hyperbolic,
 $\cC_{\infty}$  consists of either $1$ point or $1$ or $2$ lines.
It is straightforward to see that all cases may actually occur, as given
any two lines $\ell,m$ in $\PG(3,q^2)$ there always exist
at least one hyperbolic quadric containing both $m$ and $\ell$.
Likewise, given a line $\ell\in\Sigma_{\infty}$ with $P\in\ell$
there always is at least one cone with vertex $V\in\ell$ and $V\neq P$
meeting $\Sigma_{\infty}$ just in $\ell$.
\end{proof}
We now determine the number of affine points that $\cQ$ and $\cH$ have in common.
\begin{lemma}\label{main2}
The possible sizes of $(\cH\cap\cQ)\setminus\Sigma_{\infty}$,
are either
\[q^3-q^2,q^3-q^2+q, q^3-q, q^3, q^3+ q, q^3+q^2-q,q^3+q^2\]
when $\cQ$ is elliptic,
or
\[q^3-q^2+q, q^3-q, q^3+ q, q^3+q^2-q,\]
when $\cQ$ is a  cone,
or
\[ q^2, q^3-q^2, q^3-q^2+q, q^3-q, q^3, q^3+ q, q^3+q^2-q,q^3+q^2, 2q^3-q^2\]
 when $\cQ$ is hyperbolic.
\end{lemma}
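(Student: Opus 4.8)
The plan is to reduce the affine count to a point-count for a single quadric over the subfield $\GF(q)$. Substituting $z=ax^{2}+by^{2}+cxy+dx+ey+f$ from \eqref{eqQ} into \eqref{eqH} and using that $w^{q}+w$ and $w^{q+1}$ lie in $\GF(q)$ for every $w\in\GF(q^{2})$, one sees that a point of $(\cH\cap\cQ)\setminus\Sigma_{\infty}$ is determined by its first two affine coordinates, and that $(x,y)\in\GF(q^{2})^{2}$ is admissible exactly when
\[ (ax^{2}+by^{2}+cxy+dx+ey+f)^{q}+(ax^{2}+by^{2}+cxy+dx+ey+f)=x^{q+1}+y^{q+1}. \]
Fixing a basis $\{1,\omega\}$ of $\GF(q^{2})$ over $\GF(q)$ and writing $x=x_{1}+x_{2}\omega$, $y=y_{1}+y_{2}\omega$, both sides become $\GF(q)$-valued, so this is a single affine quadratic equation $G=0$ in four variables over $\GF(q)$. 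Homogenising with a variable $T$ gives a projective quadric $\widehat{\cV}$ of $\PG(4,q)$ whose hyperplane $T=0$ carries the quadric $\cV_{\infty}$ cut out by the quadratic part $\bar G=(x^{q+1}+y^{q+1})+\bigl[(ax^{2}+by^{2}+cxy)^{q}+(ax^{2}+by^{2}+cxy)\bigr]$; since the affine solutions are precisely the points of $\widehat{\cV}$ off $\cV_{\infty}$, we get $|(\cH\cap\cQ)\setminus\Sigma_{\infty}|=|\widehat{\cV}|-|\cV_{\infty}|$. Everything then reduces to identifying the projective-equivalence type of $\widehat{\cV}$ in $\PG(4,q)$ and of $\cV_{\infty}$ in $\PG(3,q)$ and invoking the standard point-count formulas for quadrics and their cones over $\GF(q)$.

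The first main step is to classify $\cV_{\infty}$, whose polar (alternating) form depends only on $a,b,c$. A direct computation in the basis $\{1,\omega\}$ shows that each square term $(aw^{2})^{q}+aw^{2}$ equals the square of a $\GF(q)$-linear form and so contributes nothing to the polar form, which is therefore carried by the two norm terms together with the cross term $(cxy)^{q}+cxy$; evaluating its Pfaffian yields $1+c^{q+1}$. Hence $\cV_{\infty}$ is nondegenerate (rank $4$) exactly when $c^{q+1}\neq1$ — in particular always when $\cQ$ is a cone ($c=0$) — and it degenerates to rank at most $3$ precisely when $c^{q+1}=1$. In the nondegenerate case I would compute the Arf invariant of $\cV_{\infty}$ and compare it with the quantity $\Tr_{q^{2}}(ab/c^{2})$ that governs the type of $\cQ$; in the degenerate stratum $c^{q+1}=1$ a finer argument must decide whether $\cV_{\infty}$ has rank $3$ (a cone over a conic, $q^{2}+q+1$ points) or rank $2$ (two planes, $2q^{2}+q+1$ points, or a line, $q+1$ points) and its type.

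The second step is to determine $\widehat{\cV}$. Passing from $\bar G$ to $\widehat G$ adds the contributions of $d,e,f$, and in even characteristic whether this raises the rank (producing a parabolic quadric of $\PG(4,q)$) or leaves a cone with base $\cV_{\infty}$ is governed by whether the linear form $(dx+ey)^{q}+dx+ey$ lies in the image of the polar form of $\bar G$ and by the value $\widehat G$ takes on the resulting radical direction; I would isolate this as the precise condition on $a,\dots,f$. With both types in hand, substituting the point counts of $\PG(4,q)$ — namely $q^{3}+q^{2}+q+1$ for the parabolic quadric and for the rank-$3$ cone, $1+q(q+1)^{2}$ and $1+q(q^{2}+1)$ for the hyperbolic and elliptic rank-$4$ cones, $2q^{3}+q^{2}+q+1$ and $q^{2}+q+1$ for the rank-$2$ ones — together with the corresponding counts $(q+1)^{2}$, $q^{2}+1$, $q^{2}+q+1$, $2q^{2}+q+1$, $q+1$ for the admissible types of $\cV_{\infty}$ in $\PG(3,q)$, gives the difference $|\widehat{\cV}|-|\cV_{\infty}|$ in each admissible combination. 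One checks that these differences are precisely the listed values: the nondegenerate stratum $c^{q+1}\neq1$ produces the four ``middle'' values common to all three cases, while the degenerate stratum adds the remaining ones, and the cone case, where $\cV_{\infty}$ is always nondegenerate, yields exactly the four-element list. Each value must then be exhibited by explicit coefficients.

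The hard part will be the even-characteristic type bookkeeping: pinning down the Witt type (Arf invariant) of $\cV_{\infty}$ and of $\widehat{\cV}$, matching the former to the elliptic/hyperbolic character of $\cQ$, and above all controlling the degenerate locus $c^{q+1}=1$, where the interplay between the radical of $\bar G$ and the linear part coming from $d,e,f$ splits into the several rank-$2$ and rank-$3$ sub-cases. The subtle point is to show that this enumeration reproduces exactly the stated lists — in particular that the elliptic case omits the two extreme sizes $q^{2}$ and $2q^{3}-q^{2}$ that do occur in the hyperbolic case — which amounts to proving that certain type-pairs $(\widehat{\cV},\cV_{\infty})$ are incompatible with $\Tr_{q^{2}}(ab/c^{2})=1$.
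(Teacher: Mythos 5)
Your plan follows the same route as the paper's own proof: substitute $z$ from \eqref{eqQ} into \eqref{eqH}, split coordinates over a basis of $\GF(q^2)$ over $\GF(q)$ to obtain a quadric $\Xi$ of $\PG(4,q)$ (your $\widehat{\cV}$) whose affine points are in bijection with $(\cH\cap\cQ)\setminus\Sigma_{\infty}$, compute the count as $|\Xi|-|\Xi_{\infty}|$, observe that $\Xi_{\infty}$ is non-degenerate exactly when $c^{q+1}\neq 1$ (hence always when $\cQ$ is a cone, which already yields the four-element list), and then enumerate rank/type pairs using the standard point counts, which you state correctly. Up to this point your proposal and the paper agree.

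There is, however, a genuine gap: the step you yourself flag as ``the subtle point'' --- proving that the configurations giving $q^2$ and $2q^3-q^2$, namely $\rank\Xi=\rank\Xi_{\infty}=2$, are incompatible with $\cQ$ elliptic --- is precisely the mathematical core of the lemma, and you never carry it out. Without it the elliptic and hyperbolic lists cannot be separated, and the statement degenerates into a single list of all conceivable differences. The paper devotes most of its proof to exactly this point: if $\rank\Xi_{\infty}=2$ then the form of $\Xi_{\infty}$ factors into two linear forms, so the four degenerate conic sections of $\Xi_{\infty}$ by the coordinate hyperplanes force the vanishing of four $3\times 3$ determinants, giving the explicit system \eqref{sis2} in $a_0,a_1,b_0,b_1,c_0,c_1$; solving it (separately for $c_1=0$, yielding \eqref{eq6}, and for $c_1\neq 0$, yielding \eqref{eq7}) shows in both cases that $ab/c^2\in\GF(q)$, whence $\Tr_{q^2}(ab/c^2)=0$ and $\cQ$ is forced to be hyperbolic. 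Your ``Arf invariant'' and ``radical direction'' computations in the other strata are likewise only announced rather than performed, but those are routine bookkeeping; the rank-$2$ incompatibility is not, and any complete proof must contain it or an equivalent argument. A secondary omission: you never rule out $\rank\Xi_{\infty}\leq 1$ (the paper does, noting that a perfect-square form would force $c=0$, contradicting the non-degeneracy of $\cQ$), which is needed before the degenerate stratum can be split into only the rank-$3$ and rank-$2$ sub-cases you list.
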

\begin{proof}
We have to study the following system of equations
\begin{dmath}
  \label{sis1}
  \begin{cases}
    z^q+z=x^{q+1}+y^{q+1}     \\
       z=ax^{2}+by^{2}+cxy+dx+ey+f.
    \end{cases}
  \end{dmath}
In order to
solve \eqref{sis1}, recover the value of $z$ from the second
equation and substitute it in the first. This gives
\begin{dmath}
\label{eqc}
a^qx^{2q}+b^qy^{2q}+c^qx^qy^q+d^qx^q+e^qy^q+f^q
+ax^{2}+by^{2}\\+cxy+dx+ey+f
=x^{q+1}+y^{q+1}.
\end{dmath}
Consider $\GF(q^2)$ as a vector space over $\GF(q)$ and fix a
basis $\{1,\varepsilon\}$ with $\varepsilon\in\GF(q^2)\setminus
\GF(q)$. Write any element in $\GF(q^2)$ as a linear
combination with respect to this basis, that is, for any $x\in\GF(q^2)$
let $x=x_0+x_1\varepsilon$, where
$x_0,x_1\in\GF(q)$. Thus, \eqref{eqc} can be studied as an equation over
$\GF(q)$ in the indeterminates $x_0,x_1,y_0,y_1$.
Analogously write also $a=a_0+\varepsilon a_1$,
$b=b_0+\varepsilon b_1$ and so on.

As $q$ is even, it is always possible to choose
$\varepsilon \in \GF(q^2) \setminus\GF(q)$
such that $\varepsilon^2+\varepsilon+\nu=0$, for some
$\nu \in \GF(q)\setminus \{1\}$ and $\Tr_{q}(\nu)=1$.
Then, also, $\varepsilon^{2q}+\varepsilon^q+\nu=0$. Therefore,
$(\varepsilon^q+\varepsilon)^2+(\varepsilon^q+\varepsilon)=0$,
whence $\varepsilon^q+\varepsilon+1=0$. With
this choice of $\varepsilon$, \eqref{eqc} reads as
\begin{dmath}
\label{eqeven1}
(a_1+1)x_0^2+x_0x_1+[a_0+(1+\nu)a_1+\nu]x_1^2+(b_1+1)y_0^2+y_0y_1
+[b_0+(1+\nu)b_1+\nu]y_1^2+c_1x_0y_0+(c_0+c_1)x_0y_1+(c_0+c_1)x_1y_0
+[c_0+(1+\nu)c_1]x_1y_1
+d_1x_0+(d_0+d_1)x_1+e_1y_0+(e_0+e_1)y_1+f_1=0.
\end{dmath}
The solutions $(x_0,x_1,y_0,y_1)$ of
\eqref{eqeven1} correspond to the affine points of a (possibly degenerate)
 quadratic hypersurface $\Xi$ of $\PG(4,q)$.
Recall that
the number $N$ of affine points of $\Xi$ equals
the number of points of $\cH\cap\cQ$ which lie in
$\AG(3,q^2)$; we shall use the results of
\cite{HT} in order to actually count these points.

To this purpose, we  first  determine the number of  points
at infinity of $\Xi$.
These points are those of the quadric $\Xi_\infty$ of
$\PG(3,q)$ with equation
\begin{dmath}\label{eq00}
f(x_0,x_1,y_0,y_1)=(a_1+1)x_0^2+x_0x_1+[a_0+(1+\nu)a_1+\nu]x_1^2+(b_1+1)y_0^2+y_0y_1
+[b_0+(1+\nu)b_1+\nu]y_1^2+c_1x_0y_0+(c_0+c_1)x_0y_1+(c_0+c_1)x_1y_0
+[c_0+(1+\nu)c_1]x_1y_1=0.
\end{dmath}

Following the approach outlined in Section \ref{s2}, write
\[A_{\infty}=\begin{pmatrix}
    2(a_1+1) &1&c_1& c_0+c_1 \\
1 & 2[a_0+(1+\nu)a_1+\nu]  & c_0+c_1&c_0+(1+\nu)c_1  \\
   c_1 & c_0+c_1& 2(b_1+1) &1 & \\
    c_0+c_1&c_0+(1+\nu)c_1&1 & 2[b_0+(1+\nu)b_1+\nu]
   \end{pmatrix}. \]
As $q$ is even, a direct computation gives  $\det A_{\infty}=1+c^{2(q+1)}$. The quadric  $\Xi_{\infty}$ is  non-degenerate  if and only if
$\det A_{\infty}\neq 0$, that is $c^{q+1}\neq1$.

 When $\cQ$ is a cone,  namely $c=0$, it turns out that $\det A_{\infty}\neq 0$ and hence
 $\rank \Xi_{\infty}=4$.

Assume now $\cQ$ to be non-degenerate.
If the equation  of $\Xi_{\infty}$ were to be of the form
$f(x_0,x_1,y_0,y_1)=(lx_0+mx_1+ny_0+ry_1)^2$ with $l,m,n,r$ over some extension of $\GF(q)$,
then $c=0$; this is a contradiction. So $\rank \Xi_\infty\geq 2$.

 We are next going to show that when $\rank \Xi_{\infty}=2$, that is $\Xi_{\infty}$ splits into two planes,
 the quadric  $\cQ$  is hyperbolic, that is  $\Tr_{q^2}(ab/c^2)=0$.
 First observe that $c^{q+1}=1$ since the quadric $\Xi_{\infty}$ is degenerate.

Consider now the following $4$
intersections  $\cC_0:\Xi_{\infty}\cap [x_0=0] $, $\cC_1:\Xi_{\infty}\cap [x_1=0] $,
$\cC_2:\Xi_{\infty}\cap [y_0=0]$, $\cC_3:\Xi_{\infty}\cap [y_1=0]$.
Clearly, as $\Xi_{\infty}$ is, by assumption, reducible in the union of two planes,
all of these conics are  degenerate; thus
we get the following four formal equations
\[ \frac{1}{2}\det\begin{pmatrix}
 2[a_0+(1+\nu)a_1+\nu]  & c_0+c_1& c_0+(1+\nu)c_1  \\
    c_0+c_1& 2(b_1+1) &1 & \\
    c_0+(1+\nu)c_1&1 & 2[b_0+(1+\nu)b_1+\nu]
   \end{pmatrix}=0, \]
\[ \frac{1}{2}\det\begin{pmatrix}
    2(a_1+1) &c_1& c_0+c_1 \\
   c_1 & 2(b_1+1) &1 & \\
    c_0+c_1&1 & 2[b_0+(1+\nu)b_1+\nu]
   \end{pmatrix}=0, \]
\[ \frac{1}{2}\det\begin{pmatrix}
    2(a_1+1) &1& c_0+c_1 \\
1 & 2[a_0+(1+\nu)a_1+\nu]  &c_0+(1+\nu)c_1  \\
       c_0+c_1&[c_0+(1+\nu)c_1]& 2[b_0+(1+\nu)b_1+\nu]
   \end{pmatrix}=0,\]
\[\frac{1}{2}\det\begin{pmatrix}
    2(a_1+1) &1&c_1\\
1 & 2[a_0+(1+\nu)a_1+\nu]  & c_0+c_1  \\
   c_1 & c_0+c_1& 2(b_1+1)
   \end{pmatrix}=0. \]
Using the condition $c^{q+1}=1$, these give
\begin{equation}\label{sis2}
\begin{cases}
 a_0+(1+\nu)a_1+(c_0^2+c_1^2)b_0+\nu (c_0^2+c_1^2+c_1^2\nu)b_1=0\\
  a_1+c_1^2b_0+(c_0^2+\nu c_1^2)b_1=0\\
  (c_0^2+c_1^2)a_0+\nu(c_0^2+c_1^2+\nu c_1^2)a_1+b_0+(1+\nu)b_1=0\\
  c_1^2a_0+(c_0^2+\nu c_1^2)a_1+b_1=0.
\end{cases}
\end{equation}
If $c_1=0$, then $c_0=1$ since $c^{q+1}=1$ and,
 solving \eqref{sis2}, we obtain
 \begin{equation}\label{eq6}
 \begin{cases}
   a_1=b_1 \\
   a_0+a_1+b_0=0.
  \end{cases}
 \end{equation}
Therefore   $\Tr_{q^2}(ab/c^2)=\Tr_{q^2}((a_0+\varepsilon a_1)(a_0+(\varepsilon+1) a_1))=\Tr_{q^2}(a^{q+1})=0$ as $a^{q+1}\in\GF(q)$.

Suppose now  $c_1\neq 0$. System~\eqref{sis2} becomes
\begin{equation}
\label{eq7}
\begin{cases}
a_0=\left(\frac{c_0^2}{c_1^2}+\nu\right)a_1+\frac{b_1}{c_1^2}\\
b_0=\frac{a_1}{c_1^2}+\left(\frac{c_0^2}{c_1^2}+\nu\right)b_1;\\
\end{cases}
\end{equation}
 hence,
\[\frac{ab}{c^2}=\frac{(a_1^2+b_1^2)(c_1^2\nu+c_1^2\varepsilon+c_0^2)+a_1b_1(c_1^2\nu+c_1^2\varepsilon+c_0^2+1)^2}{c_1^4(c_0+\varepsilon c_1)^2}.\]
Since $ \varepsilon^2=\varepsilon +\nu $ and $c_1^2\nu+c_0^2+1=c_0c_1$, we get $$\frac{ab}{c^2}=\frac{a_1^2+b_1^2}{c_1^4}+\frac{a_1b_1}{c_1^2}\in\GF(q),$$
which gives $\Tr_{q^2}(ab/c^2)=0$ once more.
 Hence if $\rank \Xi_{\infty}=2$, then $\cQ$ is hyperbolic.

When $\rank \Xi_{\infty}=3$, then $\Xi_{\infty}$ is a cone comprising the join of a point to a conic.

Finally, when $\Xi_{\infty}$ is non-degenerate, we can establish the nature of $\Xi_{\infty}$ by computing
 the trace of $\alpha$ as given by~\eqref{eqAlpha}
 where
\[B=\begin{pmatrix}
    0 &1&c_1& c_0+c_1 \\
-1 & 0  & c_0+c_1&c_0+(1+\nu)c_1  \\
   -c_1 & -(c_0+c_1)& 0 &1 & \\
    -(c_0+c_1)&-c_0-(1+\nu)c_1&-1 & 0
   \end{pmatrix} .\]
Write  $\gamma=(1+c^{q+1})=(1+c_0^2+\nu c_1^2+c_0c_1)$. A straightforward
computation shows
\begin{multline}
\label{alfa}
\alpha=\frac{a_0+a_1+b_0+b_1}{\gamma}+
   \frac{1}{\gamma^2}\big[(1+\nu)(a_1^2+b_1^2)+\\ (c_0^2+c_1^2\nu)(a_0b_1+a_1b_0)+
a_0a_1+b_0b_1+a_0b_0c_1^2+a_1b_1c_0^2\big].
\end{multline}
Thus, the following possibilities for $N=|\Xi|-|\Xi_{\infty}|=|(\cH\cap\cQ)\cap \AG(3,q^2)|$  may occur according as:
\begin{enumerate}[(C1)]
\item\label{C1}
  $ \rank \Xi=5$ and  $ \rank \Xi_{\infty}=4$;
    \begin{enumerate}[\mbox{(C\ref{C1}.}1)]
\renewcommand{\theenumi}{\relax}
  \item\label{C1.1}  $\Xi$ is a parabolic quadric and  $\Xi_{\infty}$ is a hyperbolic quadric  ($\det A_{\infty}\neq 0$  and $\Tr_{q}(\alpha)=0$). Then,
\[N= (q+1)(q^2+1)-(q+1)^2=q^3-q. \]

 \item\label{C1.2}  $\Xi$ is a parabolic quadric and the quadric $\Xi_{\infty}$ is  elliptic ($\det A_{\infty}\neq 0$ and  $\Tr_{q}(\alpha)=1$). Then,
   \[N=  (q+1)(q^2+1)-(q^2+1)=q^3+q. \]
 \end{enumerate}

\item\label{C2} $\rank\Xi=5$ and $ \rank \Xi_{\infty}=3$ ($\det A_{\infty}=0$);

  $\Xi$ is a parabolic quadric and the hyperplane at infinity is tangent to $\Xi$; thus $\Xi_{\infty}$ is a cone
comprising the join of a point to a conic. Then,
 \[ N= (q+1)(q^2+1)-(q^2+q+1)=q^3. \]

\item\label{C3}
  $\rank \Xi=4$ and $ \rank \Xi_{\infty}=4$;
\begin{enumerate}[\mbox{(C\ref{C3}.}1)]
\renewcommand{\theenumi}{\relax}
  \item\label{C3.1} $\Xi$ is a cone  projecting a hyperbolic quadric of $\PG(3,q)$ and the quadric $\Xi_{\infty}$ is  hyperbolic ($\det A_{\infty}\neq 0$  and $\Tr_{q}(\alpha)=0$). Then,
 \[ N= q(q+1)^2+1-(q+1)^2=q^3+q^2-q. \]
\item\label{C3.2}  $\Xi$ is a cone  projecting an elliptic quadric of $\PG(3,q)$ and the quadric $\Xi_{\infty}$ is  elliptic ($\det A_{\infty}\neq 0$  and $\Tr_{q}(\alpha)=1$). Then,
\[ N= q(q^2+1)+1-(q^2+1)=q^3-q^2+q. \]
 \end{enumerate}

\item\label{C4} $\rank \Xi=4$, $\rank \Xi_{\infty}=3$;
\begin{enumerate}[\mbox{(C\ref{C4}.}1)]
\renewcommand{\theenumi}{\relax}
  \item\label{C4.1}$\Xi$ is a cone  projecting
a hyperbolic quadric and
 $\Xi_{\infty}$ is a cone comprising the join of a point to a conic. Then,
\[N=q(q+1)^2+1-[q(q+1)+1]=q^3+q^2.\]

\item \label{C4.2}
$\Xi$ is a cone  projecting
 an elliptic  quadric and $\Xi_{\infty}$ is a cone comprising the join of a point to a conic.
 Then,
\[N=q(q^2+1)+1-[q(q+1)+1]=q^3-q^2.\]
\end{enumerate}

\item\label{C5} $\rank\Xi=4$ and $\rank\Xi_{\infty}=2$;
\begin{enumerate}[\mbox{(C\ref{C5}.}1)]
\renewcommand{\theenumi}{\relax}
\item\label{C5.1}$\Xi$ is a cone projecting a hyperbolic quadric and
$\Xi_{\infty}$ is the union of two real planes. Then,
 \[ N=q(q+1)^2+1-(2q^2+q+1)=q^3.\]
\item\label{C5.2} $\Xi$ is a cone projecting an elliptic quadric and and
$\Xi_{\infty}$  the union of two planes defined over $\GF(q^2)$. Then,
\[N=q(q^2+1)+1-(q+1)=q^3. \]
\end{enumerate}

\item\label{C6} $\rank \Xi=\rank \Xi_{\infty}=3$;

 $\Xi$  is the join of a line to a conic and  $\Xi_{\infty}$ is a cone comprising the join of a point to a conic. Then,
 \[N= q^3+q^2+q+1-(q^2+q+1)=q^3.\]

\item\label{C7} $\rank \Xi=3$, $\rank \Xi_{\infty}=2$;
\begin{enumerate}[\mbox{(C\ref{C7}.}1)]
\renewcommand{\theenumi}{\relax}
\item\label{C7.1} $\Xi$ is the join of a line to a conic whereas $\Xi_{\infty}$ is a pair of planes over $\GF(q)$. Then,
\[N=q^3+q^2+q+1-(2q^2+q+1)=q^3-q^2.\]
\item\label{C7.2} $\Xi$ is the join of a line to a conic whereas $\Xi_{\infty}$ is a line. Then,
\[N=q^3+q^2+q+1-q-1=q^3+q^2.\]
\end{enumerate}

\item\label{C8} $\rank \Xi=\rank \Xi_{\infty}=2$;
\begin{enumerate}[\mbox{(C\ref{C8}.}1)]
\renewcommand{\theenumi}{\relax}
\item\label{C8.1}
  $\Xi$ is a pair of solids and $\Xi_{\infty}$ is a pair of planes over $\GF(q)$. Then,
  \[N=2q^3+q^2+q+1-(2q^2+q+1)=2q^3-q^2.\]
\item\label{C8.2} $\Xi$ is a plane and
  $\Xi_{\infty}$ is a line. Then,
  \[N=q^2+q+1-(q+1)=q^2.\]
\end{enumerate}
\end{enumerate}
\end{proof}

Now we are going to use  the same group theoretical
arguments as in \cite[Lemma 2.3]{AG}
in order to be able to fix
the values of some of the parameters in~\eqref{eqeven1} without
losing in generality.
\begin{lemma}\label{main4}
  If $\cQ$ is a hyperbolic quadric, we can assume without
  loss of generality:  \begin{enumerate}
  \item $b=0$, and $a^{q+1}\neq c^{q+1}$
    when $\cC_{\infty}$ is just the point $P_\infty$;
  \item $b=0$, $a=c$ when $\cC_{\infty}$ is a line;
  \item $b=\beta a$, $c=(\beta+1)a$,
    $a\neq 0$ and $\beta^{q+1}=1$, with $\beta\neq 1$ when $\cC_{\infty}$ is the union of
    two lines.
  \end{enumerate}
  If $\cQ$ is a cone, we can assume without loss of generality:
  \begin{enumerate}
    \item $b=0$ when $\cC_{\infty}$ is a point;
    \item $a=b$ when $\cC_{\infty}$ is a line.
    \end{enumerate}
\end{lemma}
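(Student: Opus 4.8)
The plan is to exhibit the subgroup $G\le\PGU(4,q)$ that stabilises both $\cH$ and the point $P_\infty$ (hence its tangent plane $\Sigma_\infty$), and to use its action on the quadrics \eqref{eqQ} to reach the stated normal forms. First I would record three families of elements of $G$: (i) the linear maps $(x,y,z)\mapsto((x,y)M,z)$ with $M\in\mathrm{GU}(2,q)$ preserving the Hermitian form $x^{q+1}+y^{q+1}$; (ii) the scalings $(x,y,z)\mapsto(\mu x,\mu y,\mu^{q+1}z)$, $\mu\in\GF(q^2)^*$; and (iii) the Heisenberg translations $(x,y,z)\mapsto(x+r,y+s,z+r^qx+s^qy+g)$ with $g+g^q=r^{q+1}+s^{q+1}$. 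A direct substitution shows each fixes \eqref{eqH} and carries a quadric \eqref{eqQ} to another quadric of the same affine shape. The decisive observation, peculiar to even characteristic, is that $(x+r)^2=x^2+r^2$ has no cross term, so the translations in (iii) leave the quadratic part $ax^2+cxy+by^2$ untouched and only modify $d,e,f$. Since the conclusions constrain only $a,b,c$, it suffices to normalise this binary quadratic form under the action of $\mathrm{GU}(2,q)$ coming from (i).

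Next I would translate everything into a statement about directions at infinity. Homogenising, $\cQ\cap\Sigma_\infty$ is the set of lines through $P_\infty$ whose directions $(X:Y)$ satisfy $aX^2+cXY+bY^2=0$, while $\cH\cap\Sigma_\infty$ consists of the lines in the $q+1$ \emph{isotropic} directions $X^{q+1}+Y^{q+1}=0$. Hence the number of lines of $\cC_\infty$ equals the number of roots of $aX^2+cXY+bY^2$ that are isotropic, and this is exactly what separates the three hyperbolic subcases (recall from the discussion preceding Lemma \ref{l0} that $\Tr_{q^2}(ab/c^2)=0$ gives two distinct roots, so a hyperbolic $\cQ$ has $0$, $1$ or $2$ isotropic directions). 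The group $\mathrm{GU}(2,q)$ acts on these directions as $\PGU(2,q)\cong\PGL(2,q)$ on $\PG(1,q^2)$, preserving the isotropic set $\mathcal{I}$; it is sharply $3$-transitive on $\mathcal{I}$ and transitive on the $q(q-1)$ non-isotropic points. Moreover, in a suitable affine coordinate $u$ on $\PG(1,q^2)\setminus\{I\}$ the stabiliser of an isotropic point $I$ induces the one-dimensional affine group $u\mapsto nu+c$ ($n\in\GF(q)^*$, $c\in\GF(q)$), which is transitive on the non-isotropic points (those with $\Tr_{q^2/q}(u)\neq0$). These transitivity facts are the engine of the normalisation.

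With these in hand the cases are read off by placing the roots of $\cQ$ at standard positions. For a hyperbolic $\cQ$ with $\cC_\infty=\{P_\infty\}$ both roots are non-isotropic; moving one to the non-isotropic direction $(0:1)$ forces $b=0$, and since the remaining root $(c:a)$ stays non-isotropic we get $a^{q+1}\neq c^{q+1}$. When $\cC_\infty$ is a line, one root is isotropic and one is not; sending the isotropic root to $(1:1)\in\cH$ and then, using the stabiliser of $(1:1)$, the non-isotropic root to $(0:1)$ yields $b=0$ and $a=c$. When $\cC_\infty$ is a pair of lines both roots are isotropic; sending one to $(1:1)$ gives the factorisation $aX^2+cXY+bY^2=a(X+Y)(X+\beta Y)$, with the second root $(\beta:1)$ isotropic and distinct from $(1:1)$, i.e. $b=\beta a$, $c=(\beta+1)a$, $\beta^{q+1}=1$, $\beta\neq1$, and $a\neq0$ because $c\neq0$. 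For a cone ($c=0$) the form is the perfect square $aX^2+bY^2=(a^{1/2}X+b^{1/2}Y)^2$, a single repeated direction; placing it at $(0:1)$ when it is non-isotropic ($\cC_\infty$ a point) gives $b=0$, and at $(1:1)$ when it is isotropic ($\cC_\infty$ a line) gives $a=b$.

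The part demanding the most care is the first paragraph: checking that families (i)--(iii) genuinely fix \eqref{eqH}, that they keep $\cQ$ in the affine normal form \eqref{eqQ} so that the induced action on $(a,b,c)$ is well defined, and, above all, that the char-$2$ vanishing of the translation cross-terms makes the $G$-orbit of $(a,b,c)$ coincide with its $\mathrm{GU}(2,q)$-orbit. The only other non-formal ingredient is the transitivity of the stabiliser of an isotropic point on the non-isotropic points, which the affine-group computation above supplies; after that, the remaining steps are just reading roots off the normalised quadratic forms.
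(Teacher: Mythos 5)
Your proof is correct and takes essentially the same approach as the paper: both reduce the normalisation to the action of the stabiliser of $P_\infty$ in $\PGU(4,q)$ on the pencil of lines of $\Sigma_\infty$ through $P_\infty$, identified with $\PGU(2,q)$ acting on $\PG(1,q^2)$ with its two orbits (isotropic and non-isotropic directions), and then place the roots of $aX^2+cXY+bY^2$ at standard positions using transitivity on the isotropic points together with the transitivity of the stabiliser of an isotropic point on the non-isotropic ones. The differences are only presentational: you derive these transitivity facts directly and check that the translation part of the stabiliser leaves $(a,b,c)$ untouched, whereas the paper cites Segre (\S 42) for the group action and works with $\cQ_\infty$ directly as a line pair, which makes both of those verifications unnecessary.
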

\begin{proof}
  Let $\Lambda$ be the set of all lines of $\Sigma_{\infty}$ through
  $P_{\infty}$.
  The action of the stabilizer $G$ of $P_{\infty}$ in $\PGU(4,q)$
  on $\Lambda$
  is the same as the action
  of $\PGU(2,q)$ on the points of $\PG(1,q^2)$.
  This can be easily seen by considering the action on $\PGU(2,q)$
  on the line $\ell$ spanned by $(0,1,0,0)$ and $(0,0,1,0)$ fixing the
  equation $X^{q+1}+Y^{q+1}=0$. Indeed, if $M$ is a $2\times 2$
  matrix representing any $\sigma\in\PGU(2,q)$, then
  $M':=\begin{pmatrix}
    1 & 0 & 0 \\
    0 & M_{\sigma} & 0 \\
    0 & 0 & 1
  \end{pmatrix}$
  represents an element of $\PGU(4,q)$ fixing $P_{\infty}=(0,0,0,1)$.
    The action of $\PGU(2,q)$ on $\ell$ is analyzed in detail in \cite[\S 42]{S}.
  So, we see that the group $G$ has
  two orbits on $\Lambda$, say $\Lambda_1$ and
  $\Lambda_2$ where $\Lambda_1$ consists of the
  totally isotropic lines of $\cH$ through $P_{\infty}$
  while $\Lambda_2$ contains
  the remaining $q^2-q$ lines of $\Sigma_{\infty}$ through $P_{\infty}$.
  Furthermore, $G$ is doubly transitive on
  $\Lambda_1$ and the stabilizer of any $m\in\Lambda_1$ is
  transitive on $\Lambda_2$.

  Let now $\cQ_{\infty}=\cQ\cap\Sigma_{\infty}$.
  If $\cQ$ is hyperbolic and $\cC_{\infty}=\{P_{\infty}\}$ we can assume
  $\cQ_{\infty}$ to be the union of the line $\ell:J=X=0$ and
  another line, say $u:J=aX+cY=0$ with $a^{q+1}\neq c^{q+1}$. Thus, $b=0$.

  Otherwise,
  up to the choice of a suitable element $\sigma\in G$,
  we can always take
  $\cQ_{\infty}$ as the union of
  any two lines in $\{\ell,s,t \}$
  where
  \[ \ell\!:\, J=X=0, \qquad s\!:\, J=X+Y=0,\qquad
  t\!:\,J= X+\beta Y=0
  \]
  with $\beta^{q+1}=1$ and $\beta\neq 1$.

  Actually,  when $\cC_{\infty}$ contains just
  one line we take $\cQ_{\infty}: X(X+Y)=0$,
  while if $\cC_{\infty}$ is the union of two lines we have
  $\cQ_{\infty}: (X+Y)(X+\beta Y)=0$.
  When $\cQ$ is a cone, we get either $\cQ_{\infty}: X^2=0$  or $\cQ_{\infty}: (X+Y)^2=0$.
  The lemma follows.
  \end{proof}

In the next three lemmas  we
denote by $\Xi$ the quadric of $\PG(4,q)$ of  equation \eqref{eqeven1},  whereas by $\Xi_{\infty}$ its section at infinity that is,  the quadric of $\PG(3,q)$ of equation \eqref{eq00}.
\begin{lemma}
\label{2:5}
Suppose $\cQ$ to be a hyperbolic quadric with $\cC_{\infty}$  the union
  of two lines. If
$\rank \Xi_{\infty}=2$, then $\Xi_{\infty}=\Pi_1\cup\Pi_2$ is a plane pair over $\GF(q)$.
\end{lemma}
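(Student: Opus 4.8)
The plan is to exhibit the two planes of $\Xi_\infty$ explicitly, not through a splitting invariant but by producing two \emph{skew} $\GF(q)$--rational lines lying on $\Xi_\infty$: a rank-$2$ quadric of $\PG(3,q)$ that carries two disjoint rational lines cannot be a conjugate plane pair, and hence must split over $\GF(q)$. This sidesteps the (feasible but messy) alternative of computing the characteristic-two discriminant of the transverse binary form of $\Xi_\infty$ directly.

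First I would rewrite the form $f$ of \eqref{eq00} intrinsically. Since $f$ is precisely the degree-two part of \eqref{eqc}, it equals $\Tr_{q^2/q}(ax^2+by^2+cxy)+N(x)+N(y)$, where $N$ is the norm $\GF(q^2)\to\GF(q)$, $\Tr_{q^2/q}$ the relative trace $w\mapsto w+w^q$, and $x=x_0+\varepsilon x_1$, $y=y_0+\varepsilon y_1$ (one verifies coefficientwise that this agrees with \eqref{eq00}). Substituting the normalisation of Lemma~\ref{main4}(3), namely $b=\beta a$ and $c=(\beta+1)a$ with $\beta^{q+1}=1$ and $\beta\neq1$, the quadratic part factors as $ax^2+by^2+cxy=a(x+y)(x+\beta y)$.

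Next I would test $f$ on the two lines $r_1\colon x=y$ and $r_2\colon x=\beta y$. On $r_1$ the trace argument vanishes, because the coefficient of $ax^2$ collapses ($1+\beta+(1+\beta)=0$) and the two norm terms cancel; on $r_2$ one uses $N(\beta)=\beta^{q+1}=1$ together with $\beta^2+\beta+(1+\beta)\beta=0$ to obtain $f\equiv0$ once more. Each of $r_1,r_2$ is a genuine line of $\PG(3,q)$, cut out by two independent $\GF(q)$--linear conditions, and they are skew: a common point would force $(1+\beta)y=0$, hence $x=y=0$, which is impossible since $\beta\neq1$.

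Finally I would invoke the hypothesis $\rank \Xi_\infty=2$: the surface is then a pair of distinct planes $\Pi_1\cup\Pi_2$, either both defined over $\GF(q)$ or conjugate over $\GF(q^2)$. In the conjugate case every $\GF(q)$--rational point of $\Xi_\infty$ lies on the single line $\Pi_1\cap\Pi_2$, which cannot contain the two skew lines $r_1,r_2$; hence $\Pi_1,\Pi_2$ are defined over $\GF(q)$ and $\Xi_\infty$ is a plane pair over $\GF(q)$. The only delicate point is the bookkeeping of the first two steps—matching $f$ with the intrinsic expression and confirming the vanishing on $r_1,r_2$—but this is a short characteristic-two verification; the real content is the last paragraph, where the essential thing to get right is that the two rational lines one writes down are genuinely skew.
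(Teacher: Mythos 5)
Your proof is correct, and it takes a genuinely different route from the paper's. The paper attacks the rank hypothesis head on: it writes $f=(lx_0+mx_1+ny_0+ry_1)(l'x_0+m'x_1+n'y_0+r'y_1)$, extracts the ten coefficient equations \eqref{eqsys}, and then runs a long case analysis ($c_1=0$ versus $c_1\neq 0$; then the four branches coming from $(l+n)(l'+n')=0$ and $(m+r)(m'+r')=0$, with further sub-branches on the vanishing of $m,r,m',r'$) to conclude that one factor, hence both, is proportional to a form with coefficients in $\GF(q)$. You avoid the factorization entirely: you identify \eqref{eq00} with the intrinsic expression $(ax^2+by^2+cxy)+(ax^2+by^2+cxy)^q+x^{q+1}+y^{q+1}$ (this coefficientwise identification is correct, e.g. the $x$-part gives $(a_1+1)x_0^2+x_0x_1+[a_0+(1+\nu)a_1+\nu]x_1^2$ as required), note that the normalization $b=\beta a$, $c=(\beta+1)a$ of Lemma~\ref{main4} makes the trace argument factor as $a(x+y)(x+\beta y)$, and then exhibit two explicit $\GF(q)$-rational lines $x=y$ and $x=\beta y$ on $\Xi_\infty$ --- genuine lines of $\PG(3,q)$, since multiplication by $\beta$ is $\GF(q)$-linear --- whose disjointness follows from $\beta\neq 1$, and whose containment in $\Xi_\infty$ uses only $\beta^{q+1}=1$ and characteristic $2$. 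The conclusion is then immediate from the dichotomy for rank-$2$ quadrics in even characteristic: a conjugate plane pair has as its set of $\GF(q)$-rational points a single line (this is exactly the configuration of Lemma~\ref{lc}), which cannot contain two disjoint lines. What your argument buys is brevity and a conceptual explanation of the lemma: the two lines of $\cC_\infty$ resurface as two skew rational lines on $\Xi_\infty$, and it is precisely their presence that obstructs the conjugate-pair case; the argument is also uniform, with no split on $c_1=0$. What the paper's computation buys is explicit equations for the planes $\Pi_1,\Pi_2$ in the various branches and consistency with the coordinate machinery used throughout, but at the cost of several pages of case analysis.
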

\begin{proof}
By Lemma \ref{main4} we can assume that $b=\beta a$, $c=(\beta+1)a$, $a\neq 0$ and $\beta^{q+1}=1$ with $\beta\neq 1$.

Furthermore, since $\rank \Xi_{\infty}=2$, we can write
\begin{equation}
\label{eq8}
f(x_0,x_1,y_0,y_1)=(lx_0+mx_1+ny_0+ry_1)(l'x_0+m'x_1+n'y_0+r'y_1);
\end{equation}
for some values of $l,m,n,r$ and $l',m',n',r'$.
Then, the following must be satisfied:
\begin{equation}
\label{eqsys}
\begin{cases}
  ll'=a_1+1\\
  lm'+l'm=1 \\
  l'n+ln'=c_1 \\
  l'r+lr'=c_0+c_1 \\
  mm'=a_0+(1+\nu)a_1+\nu\\
  mn'+nm'=c_0+c_1\\
  mr'+rm'=c_0+(1+\nu)c_1\\
  nr'+rn'=1\\
  nn'=b_1+1\\
  rr'=b_0+(1+\nu)b_1+\nu.
\end{cases}
\end{equation}

If $c_1=0$, then $c_0=1$ as $c^{q+1}=1$; in particular, as $c=b+a$, we have
$a_0+b_0=c_0=1$ and, consequently, as \eqref{eq6} holds  we get $a_1=1=b_1$.
 System \eqref{eqsys} becomes
\[
\begin{cases}
  ll'=0\\
  lm'+l'm=1 \\
  l'n+ln'=0 \\
  l'r+lr'=1 \\
  mm'=a_0+1\\
  mn'+nm'=1\\
  mr'+rm'=1\\
  nr'+rn'=1\\
  nn'=0\\
  rr'=b_0+1.
\end{cases}
\]
So, either $l=0$ or $l'=0$. Suppose the former; then $l'\neq0$ and
$m={l'\,}^{-1}$. We also have $n=0$ and consequently $r={l'\,}^{-1}$.
It follows that
$\Pi_1$ is the plane of equation $x_1+y_1=0$. In particular, both $\Pi_1$
and, consequently, $\Pi_2$ are defined over $\GF(q)$. If $l'=0$, an
analogous argument leads to $\Pi_2:x_1+y_1=0$ and, once more, $\Xi_{\infty}$
splits into two planes defined over $\GF(q)$.

Now suppose
$c_1\neq 0$. From \eqref{eqsys} we get in particular
\begin{equation}\label{eqprime}\begin{cases}
    ll'=a_1+1 \\
    ln'+l'n=c_1 \\
    nn'=b_1+1 \\
    mm'=a_0+(1+\nu)a_1+\nu \\
    mr'+rm'=c_0+(1+\nu)c_1 \\
    rr'=b_0+(1+\nu)b_1+\nu \\
    nr'+rn'=1 \\
    lm'+l'm=1.
  \end{cases}\end{equation}
We obtain $ll'+nn'=a_1+b_1=c_1=ln'+l'n$ and
$mm'+rr'=c_0+(1+\nu)c_1=mr'+rm'$.
Hence,
\[ (l'+n')(l+n)=0,\qquad (m+r)(m'+r')=0. \]
There are the following cases to consider, namely
\begin{enumerate}
\item  $l=n$ and $m=r$;
\item  $l'=n'$ and $m'=r'$;
\item  $l=n$ and $m'=r'$;
\item  $l'=n'$ and $m=r$.
\end{enumerate}
Suppose first $l=n$ and $m=r$; then
$n(n'+l')=c_1\neq 0$; consequently, $n=l\neq 0$ and also $n'\neq l'$.
If $m=0$, then $\Pi_1$ has equation $x_0+x_1=0$ and is defined over
$\GF(q)$; then also $\Pi_2$ is defined over $\GF(q)$ and we are done.

Suppose now $m\neq 0$ (and hence $r\neq 0$). We claim $l/m\in\GF(q)$. This would give that
$\Pi_1$ is defined over $\GF(q)$, whence the thesis.
From~\eqref{eqprime} we have
\[ \begin{cases}
    n'=\frac{b_1+1}{n} \\
    r'=\frac{b_0+(1+\nu)b_1+\nu}{r} \\
    nr'+rn'=1.
    \end{cases} \]
Replacing the values of $n'$ and $r'$ in the last equation we obtain
\begin{equation}
\label{eqA}
 n^2\left(b_0+(1+\nu)b_1+\nu\right)+r^2\left({b_1+1}\right)+nr=0;
 \end{equation}
if we consider
\[ \begin{cases}
    ll'=\frac{a_1+1}{n} \\
    lm'+lm'=1\\
    mm'=a_0+(1+\nu)a_1+\nu
    \end{cases} \]
a similar argument on $l,l',m,m'$ gives
\begin{equation}
l^2\left(a_0+(1+\nu)a_1+\nu\right)+m^2(a_1+1)+lm=0.
\end{equation}
Since, by assumption, $l=n$ and $m=r$ we get
\[ l^2\left(a_0+b_0+(1+\nu)(a_1+b_1)\right)+m^2(a_1+b_1)=0, \]
whence $l^2/m^2\in\GF(q)$. As $q$ is even, this gives $l/m\in\GF(q)$.
The case $l'=n'$ and $m'=r'$ is analogous.

Now suppose $l=n$ and $m'=r'$.
As $l'=\frac{(a_1+1)}{l}$ and $n'+l'=\frac{c_1}{l}$,
\[ n'=\frac{a_1+c_1+1}{l}=\frac{b_1+1}{l}. \]
If $m'=r'=0$, then $\Pi_2$ has equation $(a_1+1)x_0+(b_1+1)y_0=0$ and, consequently, is
defined over $\GF(q)$.
Suppose then $m'=r'\neq 0$. There are several subcases to consider:
\begin{itemize}
\item
if $m=0$, then $m'=r'=l^{-1}$ and $\Pi_2$ has equation
$(a_1+1)x_0+x_1+(b_1+1)y_0+y_1=0$, which is defined over $\GF(q)$;
\item
if $r=0$, then $m'=r'=n^{-1}=l^{-1}$ and we deduce, as above, that $\Pi_2$
is defined over $\GF(q)$;
\item
finally, suppose $m\neq 0\neq r$; then $b_0+(1+\nu)b_1+\nu\neq 0$ and from \eqref{eqsys} we get
\[ m'=\frac{a_0+(1+\nu)a_1+\nu}{m},\qquad
   r'=\frac{b_0+(1+\nu)b_1+\nu}{r}. \]
Since $m'=r'$ we deduce
\begin{equation}\label{m/r} \frac{m}{r}=\frac{a_0+(1+\nu)a_1+\nu}{b_0+(1+\nu)b_1+\nu}\in\GF(q).
\end{equation}
Observe that $l'=(a_1+1)l^{-1}$ and also $r'=(b_0+(1+\nu)b_1+\nu)r^{-1}$; thus from \eqref{eqsys} we obtain
\begin{equation}\label{fi}
l^2(b_0+(1+\nu)b_1+\nu)+r^2(a_1+1)+(c_0+c_1)lr=0.
\end{equation}
On the other hand, since $lm'+l'm=1$,
\[ \frac{l}{m}(a_0+(1+\nu)a_1+\nu)+\frac{m}{l}(a_1+1)=1; \]
using~\eqref{m/r} we obtain
\[ \frac{l}{r}(b_0+(1+\nu)b_1+\nu)+\frac{r}{l}(a_1+1)\left(\frac{a_0+(1+\nu)a_1+1}{b_0+(1+\nu)b_1+1}\right)=1, \]
whence
\begin{equation}
\label{e16}
l^2(b_0+(1+\nu)b_1+\nu)+lr+r^2(a_1+1)\left(\frac{a_0+(1+\nu)a_1+1}{b_0+(1+\nu)b_1+1}\right)=0;
\end{equation}
thus, adding \eqref{fi} to  \eqref{e16} we get
\[ \frac{l}{r}=\frac{a_1+1}{c_0+c_1+1}\left(\frac{a_0+(1+\nu)a_1+1}{b_0+(1+\nu)b_1+1}\right)\in\GF(q) \]
and the plane $\Pi_1$ is defined over $\GF(q)$.
The case $l'=n'$ and $m=r$ is analogous.
\end{itemize}
\end{proof}

\begin{lemma}
\label{lc}
Suppose $\cQ$ to be a hyperbolic quadric a $\cC_{\infty}=\{P_{\infty}\}$. If
$\rank \Xi_{\infty}=2$, then $\Xi_{\infty}$ is a line.
\end{lemma}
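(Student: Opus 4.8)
The plan is to reduce to a normalized equation via Lemma~\ref{main4} and then to detect the splitting type of the rank-two quadric $\Xi_\infty$ from a single one-dimensional section, rather than solving the full coefficient system as in Lemma~\ref{2:5}. Since $\cC_\infty=\{P_\infty\}$, Lemma~\ref{main4} allows me to assume $b=0$, so that $b_0=b_1=0$ in \eqref{eq00}.

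First I would restrict $f$ to the plane $x_0=x_1=0$. With $b_0=b_1=0$ this yields
\[ f(0,0,y_0,y_1)=y_0^2+y_0y_1+\nu y_1^2. \]
The decisive remark is that this binary form is \emph{anisotropic} over $\GF(q)$: the substitution $t=y_0/y_1$ turns its vanishing into $t^2+t+\nu=0$, which has no root in $\GF(q)$ precisely because $\varepsilon$ was chosen with $\Tr_q(\nu)=1$. Hence $y_0^2+y_0y_1+\nu y_1^2$ is irreducible over $\GF(q)$, and notice this uses only the normalization $b=0$, not $a^{q+1}\neq c^{q+1}$.

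Next, assuming $\rank\Xi_\infty=2$, I would write $f=L\cdot L'$ with $\Xi_\infty=\Pi_1\cup\Pi_2$, where $\Pi_1:\,L=0$ and $\Pi_2:\,L'=0$; a rank-two quadratic form over $\GF(q)$ splits over $\GF(q)$ or over $\GF(q^2)$, so $L,L'$ may be taken over $\GF(q^2)$. Restricting to $x_0=x_1=0$ factors the anisotropic form above as the product of the $y$-parts of $L$ and $L'$; since that product is irreducible over $\GF(q)$, neither $y$-part is proportional to a form over $\GF(q)$, so neither $L$ nor $L'$ is rational, and hence neither plane $\Pi_1$ nor $\Pi_2$ is defined over $\GF(q)$. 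Thus $\Xi_\infty$ is not a real plane pair. As $f$ has coefficients in $\GF(q)$, the Frobenius $t\mapsto t^q$ permutes $\{\Pi_1,\Pi_2\}$; since it fixes neither, it interchanges them, so $L'=\lambda L^{(q)}$ and the planes are conjugate over $\GF(q^2)$. Writing $L=L_0+\varepsilon L_1$ with $L_0,L_1$ linear over $\GF(q)$, a point of $\PG(3,q)$ lies on $\Xi_\infty$ exactly when $L_0=L_1=0$; these two conditions are independent (otherwise $L$ would be rational), so they cut out a line. Therefore $\Xi_\infty$ is a line, as required.

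I expect the only genuinely delicate point to be the final descent step, namely identifying the $\GF(q)$-rational locus of a conjugate plane pair with a line and checking that the two real conditions $L_0=L_1=0$ are independent; everything else follows from the single observation that the $y$-section is anisotropic, which is forced by $b=0$ together with the choice $\Tr_q(\nu)=1$. This also explains the contrast with Lemma~\ref{2:5}, where $b\neq 0$ and the $y$-section need not be anisotropic. Should a more hands-on argument be preferred, one can instead imitate Lemma~\ref{2:5}: set up the system~\eqref{eqsys} with $b_0=b_1=0$, split into the cases $c_1=0$ and $c_1\neq0$, and verify directly that the linear factors cannot be rescaled into $\GF(q)$.
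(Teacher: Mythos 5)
Your proof is correct, and it takes a genuinely leaner route than the paper's, although both rest on the same key mechanism: the choice of $\varepsilon$ with $\Tr_{q}(\nu)=1$ makes the binary form $u^2+uv+\nu v^2$ anisotropic over $\GF(q)$, which forces the linear factors of the rank-two form $f$ to be irrational. The difference is where this mechanism is applied. The paper applies it to the $x_0,x_1$-part of $f$, and to do so it must first prove $a=0$; that step invokes the relations \eqref{eq6} and \eqref{eq7} (which were derived inside the proof of Lemma~\ref{main2} from the degeneracy of the four coordinate sections, under the hypothesis $\rank\Xi_{\infty}=2$), together with the case split $c_1=0$ versus $c_1\neq 0$. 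You instead apply it to the $y_0,y_1$-part, where the normalization $b=0$ from Lemma~\ref{main4} already yields the anisotropic section $f(0,0,y_0,y_1)=y_0^2+y_0y_1+\nu y_1^2$; this eliminates the reduction to $a=0$, the case distinction, and any reliance on the internal computations of Lemma~\ref{main2}, so your argument is self-contained. You also carry out explicitly the final descent step that the paper compresses into the assertion that it suffices to show $l,m,n,r\notin\GF(q)$: namely, that Frobenius must interchange the two irrational planes, so they are conjugate over $\GF(q^2)$, and the $\GF(q)$-rational locus is cut out by the two independent rational conditions $L_0=L_1=0$, hence is a line. This part of your write-up is actually more complete than the published proof. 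What the paper's longer route buys is an explicit normal form ($a=b=0$, $c^{q+1}=1$) for the configuration, which identifies the quadric completely; what yours buys is brevity and independence from Lemma~\ref{main2}'s proof. The one point to keep an eye on is your unproved assertion that a rank-two quadratic form over $\GF(q)$ splits into planes defined over $\GF(q)$ or $\GF(q^2)$; as you in effect re-derive this from the Frobenius argument (Galois-stability of the unordered pair of planes forces each plane to be fixed by the square of Frobenius), no gap results.
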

\begin{proof}
By Lemma \ref{main4} we can assume $b=0$. Since $\rank \Xi_{\infty}=2$ we have $c^{q+1}=1$.
If $c_1=0$ then
by \eqref{eq6}  we get $a=0$.
In the case in which $c_1\neq 0$ from \eqref{eq7} we again obtain $a=0$.
We have to show that $\Xi_{\infty}$ is the union of two conjugate planes.
In order to obtain this result, it suffices to prove  that   the coefficients
$l, m,n, r$ in \eqref{eq8}  belong to some extension of $\GF(q)$ but
are not in $\GF(q)$.
From \eqref{eqsys} we have
\[
\begin{cases}
  ll'=1\\
  lm'+l'm=1 \\
  mm'=\nu;
 \end{cases}
\]
thus, $\frac{l\nu}{m}+\frac{m}{l}=1$; hence $\nu l^2+l m +m^2=0$.
Since $\Tr_{q}(\nu)=1$ this implies that $\frac{l}{m} \notin\GF(q)$.
\end{proof}

Recall that a quadric $\cQ$
meeting a Hermitian surface $\cH$ in at least $3$ lines
of a regulus is permutable with $\cH$; see
\cite[\S 19.3, pag. 124]{FPS3D}. In particular, in this case the tangent planes
at each of the common points of $\cQ$ and $\cH$ coincide,
that is to say $\cQ$ and $\cH$ intersect nowhere transversally.
We have the following statement.

\begin{lemma} \label{hard}
Suppose $\cQ$ to be hyperbolic and  $\cC_{\infty}$ to
be the union of two lines. If  $\Xi_{\infty}$ is degenerate
then $\Xi$ cannot be a cone projecting a hyperbolic quadric.
\end{lemma}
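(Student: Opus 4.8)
The plan is to argue by contradiction, using the permutability criterion recalled just before the statement. Suppose $\Xi$ is a cone projecting a hyperbolic quadric. By Lemma~\ref{main4} I may assume $b=\beta a$, $c=(\beta+1)a$ with $a\neq 0$, $\beta^{q+1}=1$ and $\beta\neq 1$; the two lines of $\cC_{\infty}$ are then generators $\ell_1,\ell_2$ of $\cQ$ through $P_{\infty}$, one from each regulus, and both are totally isotropic generators of $\cH$. The goal is to show that the cone hypothesis forces $\cH\cap\cQ$ to contain at least three lines of one regulus of $\cQ$; by the permutability remark (see \cite{FPS3D}) this makes $\cQ$ and $\cH$ permutable, hence meeting \emph{nowhere transversally}, and this is the conclusion I will contradict.

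First I record a transversality fact valid in the two-line case irrespective of the cone hypothesis: $\cH$ and $\cQ$ are transversal at some point of $\ell_1$. Indeed, as $P$ runs over $\ell_1$, the tangent plane $T_P\cQ$ (the polar plane of $P$) traces a projectivity from $\ell_1$ to the pencil of planes through $\ell_1$, since the bilinear form of $\cQ$ is $\GF(q^2)$--linear; whereas $T_P\cH$ traces a $\sigma$--semilinear map, $\sigma\colon x\mapsto x^q$, since the Hermitian form of $\cH$ is sesquilinear. Writing $T_P\cQ=T_P\cH$ in an affine parameter $t$ of $\ell_1$ produces an equation of degree $q+1$, so the two planes coincide in at most $q+1$ points; as $\ell_1$ carries $q^2+1>q+1$ points, they differ somewhere on $\ell_1$. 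Hence $\cH$ and $\cQ$ are transversal at that point, which is incompatible with nowhere--transversality.

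It remains to establish the implication ``$\Xi$ a cone projecting a hyperbolic quadric $\Rightarrow$ three lines of one regulus inside $\cH\cap\cQ$'', which I would do through the $\GF(q)$--linear representation. Because $\Xi_{\infty}$ is degenerate while $\rank\Xi=4$, the radical of the bilinear form of $\Xi$ reduces to the single point $V=R_{\infty}\cap\ker L$, where $R_{\infty}$ is the radical line of $\Xi_{\infty}$ (a projective line, as $\Xi_{\infty}$ is degenerate) and $L$ is the linear part of~\eqref{eqeven1}; in particular $V\in\Sigma_{\infty}$, so the cone has its vertex at infinity and $\Xi$ is the union of the $2(q+1)$ planes $\langle V,\ell\rangle$ with $\ell$ a generator of the base. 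Each such plane meets $\AG(4,q)$ in an affine plane with direction $\langle V,X\rangle$, $X\in\Xi_{\infty}$, and it is $\GF(q^2)$--linear exactly when this direction is a line of the Desarguesian spread of $\Sigma_{\infty}$; those planes correspond to affine generators of $\cQ$ lying on $\cH$, i.e. to lines of $\cH\cap\cQ$. One then checks, distinguishing $\rank\Xi_{\infty}=3$ (where the base section is a conic) from $\rank\Xi_{\infty}=2$ (where, by Lemma~\ref{2:5}, it is a pair of real lines), that at least two of these spread directions fall in a single regulus, which together with $\ell_1\subset\cC_{\infty}$ gives the three required lines.

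The main obstacle is precisely this last step: deciding how many of the cone directions $\langle V,X\rangle$ are spread lines and verifying that the associated $\GF(q^2)$--lines lie in a common regulus of $\cQ$. This is exactly where the normalization $b=\beta a$, $c=(\beta+1)a$ is indispensable, since it fixes the position of $V$ and of the base section relative to the spread and lets the three generators be produced explicitly. With those three regulus lines in hand, the permutability criterion forces nowhere--transversality, contradicting the second paragraph, and therefore $\Xi$ cannot project a hyperbolic quadric.
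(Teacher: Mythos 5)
Your proposal has two genuine gaps, and either one alone would sink it.

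First, the central claim --- that the cone hypothesis forces at least three lines of one regulus of $\cQ$ to lie inside $\cH$ --- is never actually proved: you defer it to ``one then checks'' and yourself identify it as ``the main obstacle'', so the proof is missing exactly where the work lies. Moreover, the spread analysis you sketch is not the natural route; the efficient argument is quantitative, not configurational. Under the cone hypothesis, Lemma~\ref{main2} already pins down the number of affine common points ($N=q^3+q^2$ when $\rank\Xi=4$ and $\rank\Xi_{\infty}=3$), hence $|\cH\cap\cQ|=N+|\cC_{\infty}|=q^3+3q^2+1$. Now take a regulus $\cR$ of $\cQ$ and let $r_1,r_2,r_3$ count its lines meeting $\cH$ in $1$, $q+1$, $q^2+1$ points respectively. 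Since the $q^2+1$ pairwise disjoint lines of $\cR$ cover $\cQ$,
\[ r_1+r_2+r_3=q^2+1,\qquad r_1+(q+1)r_2+(q^2+1)r_3=q^3+3q^2+1, \]
whence $qr_1+(q-1)r_2=q(q^2-q-1)$; thus $q\mid r_2$, then $r_2\le q(q-1)$ (else $r_1<0$), and therefore $r_3=(q^2+2q-r_2)/q\ge 3$. This elementary count, whose only input is the cardinality supplied by Lemma~\ref{main2} --- an input your proposal never invokes --- is the entire content of the missing step.

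Second, the contradiction you aim at cannot work. You propose to refute nowhere-transversality by showing, ``irrespective of the cone hypothesis'', that $\cH$ and $\cQ$ are transversal at some point of $\ell_1$, on the grounds that equality of the two tangent planes along $\ell_1$ is a degree-$(q+1)$ condition in a parameter of $\ell_1$ and so holds at most $q+1$ times. That bound presupposes the relevant polynomial is not identically zero; but it is identically zero precisely when the tangent planes agree at every point of $\ell_1$, which is exactly what happens when $\cQ$ is permutable with $\cH$. Permutable pairs with $\cC_{\infty}$ a pair of lines do exist --- they realize the value $2q^3+q^2+1$ in Theorem~\ref{main1} and satisfy all hypotheses of the present lemma --- so your transversality claim is false in the generality in which you assert it, and invoking it against the permutability produced by your other step is circular. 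The contradiction must be numerical, as in the paper's proof: three lines of $\cR$ inside $\cH$ make $\cQ$ permutable with $\cH$, forcing $|\cQ\cap\cH|=2q^3+q^2+1$, which is incompatible with the size $q^3+3q^2+1$ dictated by the cone hypothesis (the two values differ for every $q>2$; the boundary case $q=2$ is a delicacy already present in the paper's own argument).
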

\begin{proof}
 Suppose $\Xi$ to be
a cone projecting a hyperbolic quadric; then, by Lemma \ref{main2},
Case (C\ref{C5}.\ref{C5.1}),
$|\cH\cap\cQ|=q^3+3q^2+1$.
Let $\cR$ be a regulus of $\cQ$ and denote by $r_1,r_2,r_3$ respectively
the numbers of $1$--tangents, $(q+1)$--secants and $(q^2+1)$-secants to $\cH$ in $\cR$.
A direct counting gives
\begin{equation}\label{r3} \begin{cases}
 r_1+r_2+r_3=q^2+1 \\
 r_1+(q+1)r_2+r_3(q^2+1)=q^3+3q^2+1.
\end{cases}\end{equation}
By straightforward algebraic manipulations we obtain
\[ qr_1+(q-1)r_2=q(q^2-q-1). \]
In particular, $r_2=qt$ with $t\leq q$.
If it were $t=q$, then $r_1=-1$ --- a contradiction;
so $r_2\leq q(q-1)$.
Solving~\eqref{r3} in $r_1$ and $r_3$, we obtain
\[ r_3=\frac{q^2+2q-r_2}{q}\geq\frac{q^2+2q-q^2+q}{q}=3. \]
In particular,
there are at least $3$ lines of $\cR$ contained in $\cH$.
This means that $\cQ$ is  permutable with $\cH$,
see \cite[\S 19.3, pag. 124]{FPS3D} or \cite[\S 86, pag. 154]{S} and
$|\cQ\cap\cH|=2q^3+q^2+1$, a contradiction.
\end{proof}

\section{Proof of Theorem~\ref{main1}}

We use the same setup as in the previous section.
Lemma~\ref{main2}  gives the possible values of $N$ that is, the intersection sizes of the quadric
$\cQ$ and the non-singular Hermitian variety $\cH$  in $\AG(3,q^2)$. Recall that  we have computed $N$ as the difference
$|\Xi|-|\Xi_{\infty}|$ where $\Xi$ is the quadric of $\PG(4,q)$ of  equation \eqref{eqeven1},  whereas $\Xi_{\infty}$ is its section at infinity.

In order to complete the proof of Theorem~\ref{main1} we need to determine the nature of  $\cC_{\infty}=\{\cQ\cap\cH\} \setminus \AG(3,q^2)$ in all possible cases.
\subsection{The elliptic case}
Let $\cQ$ be an elliptic quadric. In this case $\cC_{\infty}=\{P_{\infty}\}$.
The possible sizes for the affine part of
$\cH\cap\cQ$
correspond to cases (C\ref{C1}), (C\ref{C2}), (C\ref{C3}), (C\ref{C4}) and (C\ref{C6}) of Lemma \ref{main2}, whence
\[|\cQ\cap\cH|=N+1 \in \{\begin{array}[t]{l}q^3-q^2+1, q^3-q^2+q+1, q^3-q+1, q^3+1, q^3+q+1,\\ q^3+q^2-q+1, q^3+q^2+1 \}. \end{array} \]

\subsection{The degenerate case}

Suppose $\cQ$ to be a cone.
Since $c=0$ in~\eqref{eqQ},  we get that $\Xi_{\infty}$ is non degenerate as $\det A_{\infty}=1$; hence,
the size of the affine part of $\cH\cap\cQ$
falls in one of cases (C\ref{C1}) or (C\ref{C3}) in Lemma \ref{main2}.
Here $\cC_{\infty}$ is
either a point or one line.

If  $\cC_{\infty}$ consists of just $1$  point, then we can assume $b=0$  in \eqref{eqQ} by Lemma \ref{main4}.
Thus \eqref{alfa} becomes $\alpha=a_0+a_1+(1+\nu)a_1^2+a_0a_1$ and hence
 cases (C\ref{C1}) and (C\ref{C3}) in Lemma \ref{main2} may happen; so,
\[|\cQ\cap\cH|=N+1 \in \{q^3-q^2+q+1, q^3-q+1, q^3+q+1, q^3+q^2-q+1 \}. \]

If  $\cC_{\infty}$ consists of $q^2+1$ points on a
line, then
we can assume $a=b$  in \eqref{eqQ} by Lemma \ref{main4}.
In this case $\alpha=0$; thus,  only subcases  (C\ref{C1}.\ref{C1.1}) of (C\ref{C1}) and (C\ref{C3}.\ref{C3.1}) of (C\ref{C3})  in Lemma \ref{main2} may occur.
In particular, \[|\cQ\cap\cH|=N+q^2+1 \in \{q^3+q^2-q+1, q^3+2q^2-q+1 \}. \]

\subsection{The hyperbolic case}

If $\cQ$ is a hyperbolic quadric, then we have three possibilities
for $\cC_{\infty}$, that is $\cC_{\infty}$ is either a point, or a line or the union of two lines.

If $\cC_{\infty}=\{P_{\infty}\}$, then all  cases (C\ref{C1})--(C\ref{C8}) of Lemma \ref{main2}
might occur. We are going to show that some subcases of Lemma \ref{main2} can be excluded.

When $\rank \Xi_{\infty}=2$, from Lemma
\ref{lc} we have that  subcases (C\ref{C7}.\ref{C7.2})   and (C\ref{C8}.\ref{C8.1}) cannot occur.
So,
\[|\cQ\cap\cH|=N+1 \in\{\begin{array}[t]{l} q^2+1, q^3-q^2+1, q^3-q^2+q+1, q^3-q+1,  q^3+1,  q^3+q+1, \\  q^3+q^2-q+1, q^3+q^2+1 \}.\end{array}\]

Suppose now $\cC_{\infty}$ to be exactly one line.  By Lemma~\ref{main4}
we can assume $b=0$ and $a=c$  in~\eqref{eqQ}.

 When  $\Xi_{\infty}$ is non degenerate,  only cases  (C\ref{C1}) and (C\ref{C3}) may occur.  Observe that \eqref{alfa} becomes

\[ \alpha=\frac{c_0+c_1}{\gamma}+
   \frac{1}{\gamma^2}\big[(1+\nu)(c_1^2)+c_0c_1 \big]. \]

 Since $\gamma=c_0^2+\nu c_1^2+c_0c_1+1$ we have  $c_1^2+\nu c_1^2+c_1c_0=\gamma+c_0^2+c_1^2+1$. Therefore
\[ \alpha=\frac{(c_0+c_1)}{\gamma}+\frac{c_0^2+c_1^2}{\gamma^2}+\frac{1}{\gamma}+\frac{1}{\gamma^2} \]
and
$\Tr_{q}(\alpha)=0$.
 Hence,  subcases  (C\ref{C1}.\ref{C1.2}) and (C\ref{C3}.\ref{C3.2}) of Lemma \ref{main2} cannot  happen;
so,
\[ |\cQ\cap\cH|=N+q^2+1 \in\{ q^3+q^2-q+1, q^3+2q^2-q+1 \}. \]

Assume now that $\Xi_{\infty}$ is degenerate. Cases (C\ref{C2}) and (C\ref{C4})--(C\ref{C8}) of Lemma \ref{main2} occur.
 We are going to show that
$\rank \Xi_{\infty}=3$.
Suppose, on the contrary, $\rank \Xi_{\infty}=2$.
 As we have seen in the proof of Lemma \ref{main2} if it were $c_1=0$, from \eqref{eq6} we would
 have $a_1=a_0=0$, that is $a=0$, which is impossible. So, $c_1\neq 0$;
since $b_1=b_0=0$,
we would now have from \eqref{eq7} $a_1=a_0=0$---again  a contradiction.

Thus, only cases (C\ref{C2}) and (C\ref{C4})
 in Lemma \ref{main2} might happen;  in particular,
\[ |\cQ\cap\cH|=N+q^2+1\in\{q^3+1, q^3+q^2+1, q^3+2q^2+1 \}. \]

Finally, when $\cC_{\infty}$ consists of two lines,  by Lemma~\ref{main4}  we can assume
$b=\beta a$, $c=(\beta+1)a$ where $a\neq 0$ and $\beta^{q+1}=1$  in \eqref{eqQ}.

Suppose now $\Xi_{\infty}$ to be non degenerate. Cases (C\ref{C1}) and (C\ref{C3}) of Lemma \ref{main2} occur.

From $c=a+b$ we get $c^{q+1}=a^{q+1}+a^qb+b^qa+b^{q+1}$. Since $b^{q+1}=a^{q+1}$, we have
$c^{q+1}=a^qb+b^qa$, that is
  \[a_0b_1+a_1b_0=c_0^2+\nu c_1^2+c_0c_1,\]

 On the other hand,  from $c_0=a_0+b_0$ and $c_1=a_1+b_1$ we obtain
  $c_0c_1=a_0a_1+a_0b_1+a_1b_0+b_0b_1$
 that is \[  a_0a_1+b_0b_1=c_0^2+\nu c_1^2. \]
 Now $(a_0b_1+a_1b_0)( a_0a_1+b_0b_1)= a_0b_0(a_1^2+b_1^2)+a_1b_1(a_0^2+b_0^2)=a_0b_0c_1^2+a_1b_1c_0^2=(c_0^2+\nu c_1^2+c_0c_1)(c_0^2+\nu c_1^2)$ and thus  \eqref{alfa} becomes
 \[\alpha = \frac{c_0+c_1}{1+c_0^2+\nu c_1^2+c_0c_1}+ \frac{(c_0+c_1)^2}{(1+c_0^2+\nu c_1^2+c_0c_1)^2};\]
so, $\alpha$ has trace $0$.

Hence just  subcases (C\ref{C1}.\ref{C1.1}) and (C\ref{C3}.\ref{C3.1})
in
Lemma \ref{main2} may occur and
\[ |\cQ\cap\cH|=N+2q^2+1\in\{q^3+2q^2-q+1,q^3+3q^2-q+1\}. \]

If $\Xi_{\infty}$ is degenerate, then cases (C\ref{C2}) and  (C\ref{C4})--(C\ref{C8}) in Lemma \ref{main2} may happen.
By Lemma  \ref{hard}  only subcases
(C\ref{C4}.\ref{C4.2}) in (C\ref{C4}) and (C\ref{C5}.\ref{C5.2}) in (C\ref{C5}) are possible.

In particular when $\rank \Xi_{\infty}=2$,
it follows from Lemma \ref{2:5} that  only subcases
(C\ref{C7}.\ref{C7.1}) in (C\ref{C7})  and (C\ref{C8}.\ref{C8.1})  in (C\ref{C8})  may occur.
Thus we get
\[ |\cQ\cap\cH|=N+2q^2+1 \in\{ q^3+q^2+1, q^3+2q^2+1, 2q^3+q^2+1 \} \]
 and the proof is completed.

It is straightforward to see, by means of a computer aided computation for small values of $q$ that
all the cardinalities enumerated above may occur.


\section{Extremal configurations}
\label{EC}
As in the case of odd characteristic, it is possible to provide
a geometric description of the intersection configuration when
the size is either $q^2+1$ or $2q^3+q^2+1$.
These values are
respectively the minimum and the maximum yielded by Theorem \ref{main1}.
and they can happen only when $\cQ$ is an hyperbolic quadric.
Throughout this section we assume that the hypotheses of
Theorem \ref{main1} hold, namely that $\cH$ and $\cQ$ share a tangent
plane at some point $P$.

\begin{theorem}
\label{ppt1}
Suppose $|\cH\cap\cQ|=q^2+1$. Then, $\cQ$ is a hyperbolic quadric
and $\Omega=\cH\cap\cQ$ is an ovoid of $\cQ$.
\end{theorem}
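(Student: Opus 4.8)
The plan is to first pin down the combinatorial type of the configuration from the extremality of the size, and then to deduce the ovoid property by a counting argument over the two reguli of $\cQ$.

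First I would determine the nature of $\cQ$ and of $\cC_{\infty}$. By Theorem~\ref{main1} the value $q^2+1$ is realised in the hyperbolic list and, for $q>2$, is strictly smaller than every entry of the elliptic and cone lists; hence $\cQ$ is hyperbolic. Tracing $q^2+1$ back through the hyperbolic part of the proof of Theorem~\ref{main1}, it arises only as $N+1$ with $\cC_{\infty}=\{P_{\infty}\}$ and affine part $N=q^2$ (case (C\ref{C8}.\ref{C8.2})). Equivalently, by Lemma~\ref{l0} a line or a pair of lines at infinity would already contribute $q^2+1$ or $2q^2+1$ points, and since the affine contribution $N$ is strictly positive by Lemma~\ref{main2}, the total would exceed $q^2+1$. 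So $\cC_{\infty}=\{P_{\infty}\}$, and $P_{\infty}$ is the unique point of $\Omega=\cH\cap\cQ$ lying on $\Sigma_{\infty}$.

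Next I would set up the count. Recall that $\cQ$ carries two reguli $\cR_1,\cR_2$, each consisting of $q^2+1$ pairwise skew generators, that every point of $\cQ$ lies on a unique line of each regulus, and that an ovoid of $\cQ$ is exactly a set of $q^2+1$ points meeting every generator in a single point (see \cite{HT,PGOFF}). I also use the standard fact that a line of $\PG(3,q^2)$ meets $\cH$ in $1$, $q+1$ or $q^2+1$ points, the last case holding precisely when the line lies on $\cH$ (see \cite{S,PGOFF}). Since $\Sigma_{\infty}$ is the common tangent plane of $\cQ$ at $P_{\infty}$, the section $\cQ\cap\Sigma_{\infty}$ is exactly the two generators through $P_{\infty}$, one in each regulus; write $\ell_0\in\cR_1$ and $m_0\in\cR_2$ for them. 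The generator $\ell_0$ lies in $\Sigma_{\infty}$, so $\ell_0\cap\cH\subseteq\Sigma_{\infty}\cap\cH\cap\cQ=\cC_{\infty}=\{P_{\infty}\}$ and hence $\ell_0\cap\Omega=\{P_{\infty}\}$. Any other generator $\ell\in\cR_1$ avoids $P_{\infty}$, meets $\Sigma_{\infty}$ in a single point of $\cQ\cap\Sigma_{\infty}$ different from $P_{\infty}$, which therefore is not on $\cH$; thus $\ell\cap\Omega$ is entirely affine. Moreover $\ell$ cannot lie on $\cH$, for otherwise its point at infinity would belong to $\cC_{\infty}\setminus\{P_{\infty}\}=\emptyset$; so $\ell\cap\Omega=\ell\cap\cH$ has either $1$ or $q+1$ points.

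Summing $|\ell\cap\Omega|$ over $\cR_1$ and using that each point of $\Omega\subseteq\cQ$ lies on a unique line of $\cR_1$ gives
\[ q^2+1=|\ell_0\cap\Omega|+\sum_{\ell\in\cR_1,\ \ell\neq\ell_0}|\ell\cap\Omega|=1+\sum_{\ell\neq\ell_0}|\ell\cap\Omega|, \]
so the $q^2$ generators of $\cR_1$ other than $\ell_0$ contribute exactly $q^2$ points in total; since each contributes at least one, none may contribute $q+1$, and every line of $\cR_1$ meets $\Omega$ in exactly one point. Running the identical argument for $\cR_2$ (with $m_0$ in place of $\ell_0$) shows the same for the second regulus, whence $\Omega$ meets every generator of $\cQ$ exactly once and is an ovoid. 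The main obstacle is the bookkeeping that isolates the single point at infinity from the affine part: one must rule out that any generator other than $\ell_0,m_0$ lies on $\cH$ --- such a line would contribute $q^2+1$ collinear points and destroy the count --- and one must carry the argument through for both reguli, since a single line such as $m_0$ already meets every generator of the opposite regulus exactly once without being an ovoid. A minor subtlety is the numerical coincidence at $q=2$, where $q^2+1=q^3-q^2+1$ also occurs for an elliptic $\cQ$; this borderline case has to be excluded or treated separately.
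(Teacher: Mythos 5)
Your proposal is correct and takes essentially the same route as the paper: the paper's own proof fixes a regulus $\cR$ of $\cQ$, observes that its $q^2+1$ generators are pairwise disjoint and each meets $\cH$ in at least one point, and concludes from $|\cH\cap\cQ|=q^2+1$ that each generator meets $\Omega$ in exactly one point, whence $\Omega$ is an ovoid. Your version is simply a more detailed rendition of this same count --- the bookkeeping isolating $P_{\infty}$ and the affine points is not actually needed, but you are right (and more careful than the paper) on two points it glosses over: the argument must be run for both reguli, since meeting every line of a single regulus once does not characterize ovoids, and at $q=2$ the coincidence $q^2+1=q^3-q^2+1$ means hyperbolicity of $\cQ$ does not follow from Theorem~\ref{main1} alone.
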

\begin{proof}
  By Theorem \ref{main1}, $\cQ$ is hyperbolic.
  Fix a regulus $\cR$ on $\cQ$. The  $q^2+1$ generators of $\cQ$ in $\cR$ are pairwise disjoint
  and each has non-empty intersection with $\cH$; so there can be at most
  one point of $\cH$ on each of them. It follows that $\cH\cap\cQ$ is an
  ovoid.
   In particular,
   by the above argument, any generat of $\cQ$ through a point of
   $\Omega$ must be tangent to $\cH$. Thus, at all points of $\Omega$
   the tangent planes   to $\cH$ and to $\cQ$  are the same.
\end{proof}

\begin{theorem}
\label{ppt2}
Suppose $|\cH\cap\cQ|=2q^3+q^2+1$. Then, $\cQ$ is a hyperbolic quadric
permutable with $\cH$.
\end{theorem}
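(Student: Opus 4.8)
The plan is to show that when the intersection is as large as possible, namely $|\cH\cap\cQ|=2q^3+q^2+1$, the quadric $\cQ$ must meet $\cH$ in a configuration of lines forcing permutability. First I would invoke Theorem~\ref{main1} together with the case analysis of Lemma~\ref{main2}: the only way to obtain the value $2q^3+q^2+1$ is via the affine contribution $N=2q^3-q^2$ coming from subcase (C\ref{C8}.\ref{C8.1}), where $\rank\Xi=\rank\Xi_\infty=2$, combined with the contribution $2q^2+1$ from $\cC_{\infty}$, so that $\cQ$ is necessarily hyperbolic with $\cC_{\infty}$ the union of two lines. In particular $\Xi$ splits into a pair of solids and $\Xi_\infty$ into a pair of planes over $\GF(q)$.

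The heart of the proof is to translate the algebraic splitting of $\Xi$ into a statement about generators of $\cQ$ lying on $\cH$. The approach I would take mirrors the counting already carried out in the proof of Lemma~\ref{hard}. Fix a regulus $\cR$ of $\cQ$ and let $r_1,r_2,r_3$ count respectively the $1$-tangents, the $(q+1)$-secants and the $(q^2+1)$-secants to $\cH$ among the $q^2+1$ lines of $\cR$. One has the system
\begin{equation*}
\begin{cases}
r_1+r_2+r_3=q^2+1 \\
r_1+(q+1)r_2+(q^2+1)r_3=2q^3+q^2+1,
\end{cases}
\end{equation*}
and I would solve this to conclude that at least three lines of $\cR$ are $(q^2+1)$-secants, i.e. are fully contained in $\cH$. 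Once three lines of one regulus lie on $\cH$, the cited result \cite[\S 19.3, pag. 124]{FPS3D} (equivalently \cite[\S 86, pag. 154]{S}) applies directly: a quadric meeting $\cH$ in at least three lines of a regulus is permutable with $\cH$, which is exactly the assertion.

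The main obstacle is ensuring that the extremal value $2q^3+q^2+1$ genuinely forces three full lines in a single regulus rather than being achievable by some other distribution of secant types; this is handled by the arithmetic of the linear system above, which I expect to yield $r_3\geq 3$ after eliminating $r_1$ (the elimination gives a bound of the form $qr_1+(q-1)r_2=q(q^2-q)$, whence $r_2\le q(q-1)$ and $r_3=(q^2+q-r_2+\cdots)/q\ge 3$, paralleling the computation in Lemma~\ref{hard}). A secondary point worth a line of justification is that the splitting of $\Xi$ into two solids over $\GF(q)$, established in case (C\ref{C8}.\ref{C8.1}), is precisely what produces the two reguli worth of lines on $\cH$, so that the hypothesis of the permutability result is met and not merely the hyperbolicity already guaranteed by Theorem~\ref{main1}.
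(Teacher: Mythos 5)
Your proof is correct, but it takes a genuinely different route from the paper: the paper gives no self-contained argument for Theorem~\ref{ppt2} at all, deducing it instead from the analysis in \cite[\S 5.2.1]{E1} combined with \cite[Lemma 19.3.1]{FPS3D}. Your argument stays inside the paper's own machinery --- Theorem~\ref{main1} (together with the trace-back to case (C\ref{C8}.\ref{C8.1}) of Lemma~\ref{main2}) to get that $\cQ$ is hyperbolic, followed by the regulus count modelled on Lemma~\ref{hard} --- and is therefore self-contained modulo the permutability criterion (three generators of a regulus lying on $\cH$ force permutability) which the paper itself already invokes from \cite[\S 19.3]{FPS3D}. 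What this buys is independence from Edoukou's external classification, at the cost of a short counting argument. Two corrections to your arithmetic sketch: eliminating $r_3$ from your system gives $qr_1+(q-1)r_2=q(q-1)^2$, not $q(q^2-q)$, whence $r_2\le q(q-1)$; and eliminating $r_1$ gives $r_3=(2q^2-r_2)/q$, so that in fact $r_3\ge q+1\ge 3$ for all even $q$, which is more than enough to apply the criterion. Finally, your closing ``secondary point'' about the two solids of $\Xi$ producing the reguli of lines is both unnecessary and, as stated, unjustified: the hypothesis of the permutability result is exactly $r_3\ge 3$, which the counting already delivers, and the hyperbolicity of $\cQ$ --- all you need in order to speak of a regulus --- is already guaranteed by Theorem~\ref{main1}.
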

Theorem \ref{ppt2} can be obtained as a consequence of
the analysis contained in \cite[\S 5.2.1]{E1},
in light of \cite[Lemma 19.3.1]{FPS3D}.


\begin{thebibliography}{999}
\bibitem{AG} A. Aguglia, L. Giuzzi, {Intersection of the Hermitian surface
with Irreducible quadrics in $\PG(3,q^2)$, $q$ odd}, \emph{Finite Fields Appl.},
{\bf 30} (2014), 1-13.
\bibitem{BBFS} D. Bartoli, M. De Boeck, S. Fanali, L. Storme,
On the functional codes defined by quadrics and Hermitian varieties,
\emph{Des. Codes Cryptogr.} {\bfseries 71} (2014), 21-46.
\bibitem{IG1} I. Cardinali, L. Giuzzi, {Codes and caps from orthogonal
    Grassmannians}, \emph{Finite Fields Appl.}, {\bf 24} (2013), 148-169.
\bibitem{IG2} I. Cardinali, L. Giuzzi, K. Kaipa, A. Pasini,
  {Line Polar Grassmann Codes of Orthogonal Type}, \emph{J. Pure Applied Algebra}
  {\bf 220} (2016), 1924-1934.
\bibitem{IG3} I. Cardinali, L. Giuzzi,
  {Minimum Distance of Symplectic Grassmann Codes}, \emph{Linear Algebra Appl.}
  {\bf 488} (2016), 124-134.
\bibitem{CP} A. Cossidente, F. Pavese, On the intersection of a Hermitian surface
  with an elliptic quadric, \emph{Adv. Geom.}, {\bf 15} (2015), 233-239.
\bibitem{E1} F.A.B. Edoukou, Codes defined by forms of degree $2$ on Hermitian
surfaces and S{\o}rensen's conjecture, \emph{Finite Fields Appl.} {\bfseries 13} (2007) 616-627.
\bibitem{EHRS} F.A.B. Edoukou, A. Hallez, F. Rodier, L. Storme, The small weight codewords of the functional codes associated to non-singular Hermitian varieties.  \emph{Des. Codes Cryptogr.} {\bf 56} (2010), no. 2-3, 219-233.
 \bibitem{E2} F.A.B. Edoukou, Codes defined by forms of degree 2 on non-degenerate Hermitian varieties in $P^4(F_q)$. \emph{Des. Codes Cryptogr.} {\bf 50} (2009), no. 1, 135-146.
\bibitem{E3} F.A.B. Edoukou, Structure of functional codes defined on non-degenerate Hermitian varieties, \emph{J. Combin Theory Series A}, {\bfseries 118}
(2010), 2436-2444.
\bibitem{EH} D. Eisenbud, J. Harris, \emph{3264 and all that: Intersection theory in algebraic geometry}, manuscript (2013).
\bibitem{fulton} W. Fulton, \emph{Intersection theory}, Springer Verlag, (1998).
\bibitem{FPS3D} J.W.P. Hirschfeld, \emph{Finite Projective Spaces of Three Dimensions}, Oxford University Press (1985).
\bibitem{HT} J.W.P. Hirschfeld, J. A.  Thas, {\em General Galois Geometries}, Springer Verlag (2015).
 \bibitem{PGOFF} J.W.P. Hirschfeld, \emph{Projective Geometries over Finite Fields},2nd Edition, Oxford University Press,New York, (1998).
\bibitem{HTV} J.W.P. Hirschfeld, M. Tsfasman, S. Vl\v{a}du\c{t},
  The weight hierarchy of higher-dimensional Hermitian codes,
\emph{IEEE Trans. Inform. Theory} {\bf 40} (1994), 275-278.
\bibitem{HS} A. Hallez, L. Storme, Functional codes arising from quadric intersections with Hermitian varieties, \emph{Finite Fields Appl.}  {\bfseries 16}
(2010) 27-35.
\bibitem{L} G. Lachaud, Number of points of plane sections and linear
codes defined on algebraic varieties, \emph{Arithmetic, Geometry and
Coding Theory; Luminy, France, 1993}, Walter de Gruyter (1996), 77-104.
 \bibitem{S} B. Segre, \emph{Forme e geometrie Hermitiane con particolare
 riguardo al caso finito}, Ann. Mat. Pura Appl. (4) {\bfseries 70} (1965).
 \bibitem{tvn} M. Tsfasman, S. Vl\v{a}du\c{t}, D. Nogin,
   \emph{Algebraic Geometry Codes: Basic Notions},
   Mathematical Surveys and Monographs 139, A.M.S. (2007).
\bibitem{tv} M. Tsfasman, S. Vl\v{a}du\c{t}, {Geometric approach to
    higher weights}, \emph{IEEE Trans. Inform. Theory} {\bfseries 41} (1995),
    1564-1588.
\end{thebibliography}
\end{document}